\documentclass[12pt]{article}
\usepackage{epsf}

\usepackage{amsthm}
\usepackage{amssymb}
\usepackage{amsmath}
\usepackage{amscd}
\setlength{\parindent}{20pt}

\linespread{1.2}

\usepackage[top=1.5cm,bottom=1.5cm,left=2.5cm,right=2.5cm,includehead,includefoot
           ]{geometry}

\makeatletter

\numberwithin{equation}{section}
\newtheorem{theorem}{Theorem}[section]
\newtheorem{lemma}[theorem]{Lemma}
\newtheorem{corollary}[theorem]{Corollary}

\title{Characterizations of Jordan mappings on some rings and algebras through zero products}
\author{\begin{tabular}{c} Wenbo Huang$,$ Jiankui Li\footnote{Corresponding author.
E-mail address: jiankuili@yahoo.com}~   and Jun He
\\{\small\it Department of Mathematics, East China University of
Science and Technology}\\
{\small\it Shanghai 200237, China}
\end{tabular}}
\date{}
\begin{document}
\maketitle \abstract Let  $\mathcal{U}=\left[
                   \begin{array}{cc}
                     \mathcal{A} & \mathcal{M}\\
                     \mathcal{N}& \mathcal{B} \\
                   \end{array}
                 \right]$  be a  generalized matrix ring, where  $\mathcal{A}$ and
$\mathcal{B}$ are  2-torsion free.  We prove that if  $\phi :\mathcal{U}\rightarrow \mathcal{U}$  is an  additive
mapping such that  $\phi(U)\circ V+U\circ \phi(V)=0$  whenever $UV=VU=0,$   then $\phi=\delta+\eta$, where $\delta$ is
a Jordan derivation and $\eta$ is a multiplier. As its applications, we  prove that the similar conclusion remains valid on
 full matrix algebras,  unital prime rings with a nontrivial idempotent,  unital standard operator algebras, CDCSL algebras and von Neumann algebras.

\ {\textbf{Keywords:}}  derivation, generalized matrix ring, Jordan derivation, multiplier, von Neumann algebra

\
{\textbf{Mathematics Subject Classification(2010):}}16W25; 46L05.

\
\section{Introduction}\

Let $\mathcal{R}$ be a ring (or algebra) with the unit $I$. For each $A,B$ in $\mathcal{R},$ We define the \emph{Jordan product} by  $A\circ B=AB+BA,$ and the  \emph{Lie product} by $[A,B]=AB-BA.$

$\mathcal{R}$ is said to be \emph{2-torsion free}
if $2A=0$ implies $A=0$ for every $A$ in $\mathcal{R}$. Recall that a ring $\mathcal{R}$
is \emph{prime} if $A\mathcal{R}B={\{0\}}$ implies that either $A=0$ or $B=0$ for each $A,B$ in $\mathcal{R}$,
and is \emph{semiprime} if $A\mathcal{R}A={\{0}\}$ implies $A=0$ for every $A$ in $\mathcal{R}$.

Let $\mathcal{M}$ be an $\mathcal{R}$-bimodule. An
additive mapping (or linear mapping) $\eta$ from $\mathcal{R}$ into $\mathcal{M}$ is
called a \emph{multiplier} if $\eta(A)=\eta(I)A=A\eta(I)$ for every
$A$ in $\mathcal{R}$.  An additive mapping (or linear mapping)  $\delta$ from
$\mathcal{R}$ into $\mathcal{M}$ is called a \emph{derivation} if
$\delta(AB)=\delta(A)B+A\delta(B)$ for each $A,B$ in $\mathcal{R}.$  Let $M$ be an element in $\mathcal{M},$ the mapping $\delta_{M}:\mathcal{R}\rightarrow \mathcal{M}, ~A\rightarrow \delta_{M}(A):=[M,A],$ is a derivation. A derivation $\delta:\mathcal{R}\rightarrow \mathcal{M}$
 is said to be an \emph{inner derivation} when it can be written in the form $\delta=\delta_{M}$ for some $M$ in $\mathcal{M}.$  An fundamental contribution, due to Sakai, states that
every derivation on a von Neumann algebra is inner(cf. \cite{sakai}).

 An additive mapping (or linear mapping)  $\delta$ from
$\mathcal{R}$ into $\mathcal{M}$ is called a \emph{Jordan derivation} if $\delta(A^{2})
=\delta(A)A+A\delta(A)$ for every
$A$ in $\mathcal{R}$.
Obviously, every derivation is a Jordan
derivation, while the converse is in general not true. A classical result
of  Herstein \cite{I. Herstein} asserts that every Jordan
derivation on a 2-torsion free prime ring is a derivation. In
\cite{Cusack}, Cusack generalizes the above result
to 2-torsion free semiprime rings. In \cite{444}, Zhang and Yu show that every Jordan derivation of triangular algebras is a derivation.
In \cite{448}, Alizadeh shows that every Jordan derivation from full matrix algebras $M_{n}(\mathcal{A})~(n\geq2)$ into $M_{n}(\mathcal{M})$ is a derivation, where $\mathcal{A}$ is a unital ring and $\mathcal{M}$ is a 2-torsion free $\mathcal{A}$-bimodule. In \cite{222}, Peralta and Russo show that every Jordan derivation from a C*-algebra $\mathcal{A}$ to a Banach $\mathcal{A}$-bimodule is a derivation.

Let~$\mathcal{A}$~and~$\mathcal{B}$~be two unital rings, and $\mathcal{M}$ be a unital
$(\mathcal{A},\mathcal{B})$-bimodule.
We say that $\mathcal{M}$  a \emph{left faithful} $\mathcal{A}$-module
if $A\mathcal{M}=\{0\}$ implies that $A=0$ for every $A\in\mathcal{A}$,
and $\mathcal{M}$  a \emph{right faithful} $\mathcal{B}$-module
if $\mathcal{M}B=\{0\}$ implies that $B=0$ for every $B\in\mathcal{B}$.
If $\mathcal{M}$ is both a left faithful $\mathcal{A}$-module and a
right faithful $\mathcal{B}$-module, then we call $\mathcal{M}$ is a untial  faithful
$(\mathcal{A},\mathcal{B})$-bimodule.

A \emph{Morita context} is a set
$(\mathcal{A},\mathcal{B},\mathcal{M},\mathcal{N})$  and two
mappings $\sigma$ and $\rho$,  where $\mathcal{A}$ and $\mathcal{B}$
are two unital rings,
$\mathcal{M}$ is a untial faithful  $(\mathcal{A},\mathcal{B})$-bimodule
and $\mathcal{N}$ (not necessarily faithful) is a
$(\mathcal{B},\mathcal{A})$-bimodule, $\sigma:
\mathcal{M}\bigotimes_{\mathcal{B}}\mathcal{N}\rightarrow\mathcal{A}$
and
$\rho:\mathcal{N}\bigotimes_{\mathcal{A}}\mathcal{M}\rightarrow\mathcal{B}$
are two homomorphisms satisfying the following commutative
diagrams:
$$
\CD
   \mathcal{M}\otimes_{\mathcal{B}}\mathcal{N}\otimes_{\mathcal{A}}\mathcal{M} @>\sigma\otimes I_{\mathcal{M}}>> \mathcal{A}\otimes_{\mathcal{A}}\mathcal{M} \\
   @V I_{\mathcal{M}}\otimes\sigma VV @V \cong VV  \\
    \mathcal{M}\otimes_{\mathcal{A}}\mathcal{B}@>\cong>> \mathcal{M},
\endCD
$$
and

$$
\CD
   \mathcal{N}\otimes_{\mathcal{A}}\mathcal{M}\otimes_{\mathcal{B}}\mathcal{N} @>\rho\otimes I_{\mathcal{N}}>> \mathcal{B}\otimes_{\mathcal{B}}\mathcal{N} \\
   @V I_{\mathcal{N}}\otimes\rho VV @V \cong VV  \\
    \mathcal{N}\otimes_{\mathcal{B}}\mathcal{A}@>\cong>> \mathcal{N}.
\endCD
$$
These conditions insure that the set

$$\mathcal{U}=\left[
                   \begin{array}{cc}
                   \mathcal{A}& \mathcal{M}\\
                     \mathcal{N}& \mathcal{B} \\
                   \end{array}
                 \right]=\left\{\left(
                   \begin{array}{cc}
                     A & M \\
                     N& B \\
                   \end{array}
                 \right):A\in\mathcal{A},M\in\mathcal{M}, N\in\mathcal{N}, B\in\mathcal{B}\right\}$$
forms a ring  under usual matrix addition and matrix
multiplication if we put $MN=\sigma(M,N)$  and $NM=\rho(N,M)$. We
call it   a \emph{generalized matrix ring.}

Let $\mathcal{A}$ and $\mathcal{B}$ be two unital   rings,
and $\mathcal{M}$ be a
unital faithful $(\mathcal{A},\mathcal{B})$-bimodule.
The set
$$\mathcal{T}=\left[
                   \begin{array}{cc}
                   \mathcal{A}& \mathcal{M}\\
                     0 & \mathcal{B} \\
                   \end{array}
                 \right]=\left\{\left(
                   \begin{array}{cc}
                     A & M \\
                     0 & B \\
                   \end{array}
                 \right):A\in\mathcal{A}, B\in\mathcal{B}, M\in\mathcal{M}\right\}$$
under the usual matrix addition and matrix multiplication is called
a \emph{triangular ring.}

In \cite{521,501,523,446,448,999,888,599}, several authors consider the following conditions on an additive (or a linear) mapping $\phi:\mathcal{R}\rightarrow \mathcal{M}:$
\begin{align*}
 ~~~~~~~~~~~~~~~~~&A,B,C \in\mathcal{R},~~~~AB=BC=0~\Longrightarrow A\phi(B)C=0;~~~~~~~~~~~~~~~~~~~~~~~(\mathbb{P}_{1})\notag\\
&A,B \in\mathcal{R},~~~~~~~AB=0~\Longrightarrow \phi(A)B+A\phi(B)=0;~~~~~~~~~~~~~~~~~~~~~~(\mathbb{P}_{2})\notag\\
&A,B \in\mathcal{R},~~~~~~~AB=BA=0~\Longrightarrow \phi(A)B+A\phi(B)=0;~~~~~~~~~~~~~~(\mathbb{P}_{3})\notag\\
&A,B \in\mathcal{R},~~~~~~~A\circ B=0~\Longrightarrow \phi(A)\circ B+A\circ\phi(B)=0;~~~~~~~~~~~~~~(\mathbb{P}_{1})\notag\\
&A,B \in\mathcal{R},~~~~~~AB=BA=0 \Longrightarrow\phi(A)\circ B+A\circ \phi(B)=0;~~~~~~~~~~~(\mathbb{P})\notag\
\end{align*}
and investigate whether these conditions characterize derivations or Jordan derivations.

In \cite{888}, Hadwin and Li show that if $\phi$ is a bounded linear mapping  from a unital C*-algebra $\mathcal{A}$
into a Banach $\mathcal{A}$-bimodule $\mathcal{M}$ with $\mathcal{M}^{*}$ weakly sequentially complete satisfying the condition $(\mathbb{P}_{1})$ and $\phi(I)=0$, then $\phi$ is a derivation.

 In \cite{999}, An and Hou show that if  $\phi$ is an additive mapping  from a unital ring  $\mathcal{R}$ with a nontrivial idempotent  into itself satisfying the condition $(\mathbb{P}_{2})$, then $\phi(R)=\delta(R)+\xi R,$ where  $\delta$ is a derivation on  $\mathcal{R}$  and $\xi$  is an element in the center of $\mathcal{R}$.

Let $\mathcal{A}$ be a Banach algebra, and  $\mathcal{M}$ be a Banach $\mathcal{A}$-bimodule. Under some mild conditions on $\mathcal{A},$ in \cite{523},  Alaminos et al. characterize a bounded linear mapping
$\phi$ from $\mathcal{A}$ into $\mathcal{M}$  satisfying the conditions $(\mathbb{P}_{1})$, $(\mathbb{P}_{2})$, $(\mathbb{P}_{3})$ and $(\mathbb{P}_{4})$, respectively.

Obviously, the condition $(\mathbb{P})$ is more general than the conditions $(\mathbb{P}_{1})$, $(\mathbb{P}_{2})$, $(\mathbb{P}_{3})$ and $(\mathbb{P}_{4})$.
In \cite{501},  Alaminos et al.  show that if $\phi$ is a bounded linear mapping  from a C*-algebra $\mathcal{A}$
into an essential Banach $\mathcal{A}$-bimodule $\mathcal{M}$ satisfying the condition $(\mathbb{P}),$  then $\phi=\delta+\eta,$ where $\delta$ is a derivation and $\eta$ is a multiplier.
In \cite{599}, Liu and Zhang show that if $\phi$ is a linear mapping  from a triangular algebra $\mathcal{A}$ into itself satisfying the condition $(\mathbb{P}),$ then $\phi=\delta+\eta,$ where $\delta$ is a derivation and $\eta$ is a multiplier.

This paper is organized as follows.
In Section 2, our purpose is to characterize an additive mapping $\phi:\mathcal{U} \rightarrow \mathcal{U}$  satisfying the condition $(\mathbb{P}),$ where $\mathcal{U}$
is a generalized matrix ring with $\mathcal{A}$ and $\mathcal{B}$ are 2-torsion free.
We prove that  $\phi=\delta+\eta$, where $\delta$ is a Jordan derivation from
$\mathcal{U}$ into itself and $\eta$ is a multiplier from $\mathcal{U}$ into itself. As its applications, we  prove that the similar conclusion remains valid on
 full matrix algebras,  unital prime rings with a nontrivial idempotent,  unital standard operator algebras, CDCSL algebras and von Neumann algebras. We extend several  results in \cite{999,599}.

\section{The Main Results}
$$ $$

In the section, we assume that  $~\mathcal{U}=\left[
                   \begin{array}{cc}
                     \mathcal{A} & \mathcal{M}\\
                     \mathcal{N}& \mathcal{B} \\
                   \end{array}
                 \right]$  is a generalized matrix ring,  where  $\mathcal{A}$ and $\mathcal{B}$
are 2-torsion free.

 We denote by
$\mathbf{\mathrm{Z}(\mathcal{U})}$ be the center of $~\mathcal{U} $.
 It is well known that
$$Z(\mathcal{U})=\left\{\left(
                   \begin{array}{cc}
                     A & 0 \\
                     0 & B \\
                   \end{array}
                 \right):AM=MB, NA=BN, M\in\mathcal{M} , N\in\mathcal{N}\right\}.$$
 Let $I _{\mathcal{A}}$  and $I _{\mathcal{B}}$ be the identities of the
rings $\mathcal{A}$ and $\mathcal{B}$, respectively.  $I$ denotes the
identity of $~\mathcal{U}.$

In the following, let
$$P_{1} = \left(\begin{array}{cc}
 I_{\mathcal{A}}&0\\
0&0 \\
\end{array}\right), P_{2} = \left(\begin{array}{cc}
 0&0\\
0&I_{\mathcal{B}}\\
\end{array}\right)~~  and~
 ~\mathcal{U}_{ij} = P_{i}\mathcal{U}P_{j}    ~~( i,j=1,2).$$
It is clear that the generalized matrix ring may be represented as
$~\mathcal{U}=\mathcal{U}_{11}+\mathcal{U}_{12}+\mathcal{U}_{21}+\mathcal{U}_{22}.$
 Here $\mathcal{U}_{11}$ and $\mathcal{U}_{22}$ are subrings of
$\mathcal{U}$ isomorphic to $\mathcal{A}$ and $\mathcal{B}$ ,
respectively.  $\mathcal{U}_{12}$ is a
$(\mathcal{U}_{11},\mathcal{U}_{22})$-bimodule isomorphic to the
$(\mathcal{A},\mathcal{B})$-bimodule $\mathcal{M}$, and
$\mathcal{U}_{21}$ is a
$(\mathcal{U}_{22},\mathcal{U}_{11})$-bimodule isomorphic to the
$(\mathcal{B},\mathcal{A})$-bimodule $\mathcal{N}.$

 The following theorem is the main result of the paper.
\begin{theorem}
Suppose that ~ $\mathcal{U}=\left[
                   \begin{array}{cc}
                     \mathcal{A} & \mathcal{M}\\
                     \mathcal{N}& \mathcal{B} \\
                   \end{array}
                 \right]$  is a  generalized matrix ring, where  $\mathcal{A}$ and
$\mathcal{B}$ are 2-torsion free.  If  $\phi$ : $\mathcal{U}\rightarrow \mathcal{U}$  is an additive mapping satisfying the
condition $(\mathbb{P}),$ then there exist a Jordan derivation $\delta$ from
$\mathcal{U}$ into
 itself and a multiplier $\eta$ from $\mathcal{U}$ into
 itself  such that $\phi=\delta+\eta$. In addition, if
 $\phi(I)=0$,  then $\phi$ is a Jordan derivation.

\end{theorem}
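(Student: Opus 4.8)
The plan is to run the zero-product condition $(\mathbb{P})$ against the Peirce decomposition $\mathcal{U}=\mathcal{U}_{11}+\mathcal{U}_{12}+\mathcal{U}_{21}+\mathcal{U}_{22}$ induced by the orthogonal idempotents $P_1,P_2$: first to locate $\phi(I)$, then to peel off a multiplier so that the remaining map annihilates $I$, and finally to verify the Jordan identity for that remaining map. The guiding observation is that any multiplier of the form $\eta(U)=CU=UC$ with $C\in Z(\mathcal{U})$ already satisfies $(\mathbb{P})$: if $UV=VU=0$ then, using centrality of $C$, one has $\eta(U)\circ V+U\circ\eta(V)=2C(UV+VU)=0$. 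Hence once I show $C:=\phi(I)\in Z(\mathcal{U})$, the map $\eta(U):=\phi(I)U$ is a multiplier with $\eta(I)=\phi(I)$, the difference $\delta:=\phi-\eta$ again satisfies $(\mathbb{P})$, and $\delta(I)=\phi(I)-\phi(I)=0$. In particular the final clause is then automatic: if $\phi(I)=0$ then $\eta=0$ and $\phi=\delta$, so it suffices to prove that a $(\mathbb{P})$-mapping killing $I$ is a Jordan derivation.

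First I would test $(\mathbb{P})$ on the pair $(P_1,P_2)$, which satisfies $P_1P_2=P_2P_1=0$. Expanding $\phi(P_1)\circ P_2+P_1\circ\phi(P_2)=0$ into Peirce components and using that $\mathcal{A}$ and $\mathcal{B}$ are $2$-torsion free, the $(1,1)$- and $(2,2)$-parts give $\phi(P_2)_{11}=0$ and $\phi(P_1)_{22}=0$, while the off-diagonal parts give $\phi(P_1)_{12}=-\phi(P_2)_{12}$ and $\phi(P_1)_{21}=-\phi(P_2)_{21}$. Summing, $\phi(I)=\phi(P_1)+\phi(P_2)$ is \emph{diagonal}, say $\phi(I)=\mathrm{diag}(c_1,c_2)$ with $c_1\in\mathcal{U}_{11}$ and $c_2\in\mathcal{U}_{22}$.

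To promote ``diagonal'' to ``central'' I would exploit the remaining two-sided zero products. These come in three structural families, $(\mathcal{U}_{12},\mathcal{U}_{12})$, $(\mathcal{U}_{21},\mathcal{U}_{21})$ and $(\mathcal{U}_{11},\mathcal{U}_{22})$, together with the conditional families $(A,M)$ with $A\in\mathcal{U}_{11}$, $M\in\mathcal{U}_{12}$, $AM=0$ (for which $MA=0$ holds automatically) and their $\mathcal{U}_{21}$/$\mathcal{U}_{22}$ analogues. Applying $(\mathbb{P})$ to these pairs, comparing Peirce components and invoking the faithfulness of $\mathcal{M}$ and $\mathcal{N}$ built into the Morita context, I expect to force $c_1M=Mc_2$ and $Nc_1=c_2N$ for all $M\in\mathcal{M}$, $N\in\mathcal{N}$, that is $\phi(I)\in Z(\mathcal{U})$. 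After replacing $\phi$ by $\delta=\phi-\eta$ I may assume $\delta(I)=0$. The same families of pairs then pin down the component behaviour of $\delta$ on the modules and, after further manipulation, lead to the partial Leibniz rules $\delta(AM)=\delta(A)M+A\delta(M)$, $\delta(MB)=\delta(M)B+M\delta(B)$, $\delta(MN)=\delta(M)N+M\delta(N)$ and their transposes. Reassembling these identities through the Peirce expansions of $U=\sum U_{ij}$ and of $U^2$ produces the full Jordan identity $\delta(U^2)=\delta(U)U+U\delta(U)$ (equivalently its linearized form $\delta(U\circ V)=\delta(U)\circ V+U\circ\delta(V)$, legitimate since $\mathcal{A}$ and $\mathcal{B}$ are $2$-torsion free), completing the proof.

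The hard part will be twofold. First, promoting $\phi(I)$ from diagonal to central: unlike the triangular case of \cite{599}, where $\mathcal{N}=0$, here both modules $\mathcal{M}$ and $\mathcal{N}$ are present, so the relations $c_1M=Mc_2$ and $Nc_1=c_2N$ cannot be read off any single zero-product pair and must be extracted by combining several of the identities above with the faithfulness hypotheses. Second, controlling the corners $\mathcal{U}_{11}$ and $\mathcal{U}_{22}$: the only two-sided zero products touching a single corner are the conditional ones, so the Jordan identity on $\mathcal{U}_{11}^{2}$ and $\mathcal{U}_{22}^{2}$ is not available directly and must be recovered indirectly from the module identities, which is the most delicate bookkeeping in the argument.
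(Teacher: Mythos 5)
Your overall architecture is the same as the paper's: show $\phi(I)\in Z(\mathcal{U})$, peel off the multiplier $\eta(U)=\phi(I)U$, and verify the Jordan identity for the remainder by Peirce-component bookkeeping against carefully chosen two-sided zero products. One minor remark first: centrality of $\phi(I)$ is easier than you anticipate and needs no faithfulness. Testing $(\mathbb{P})$ on the pair $(P,I-P)$ for an \emph{arbitrary} idempotent $P$ and multiplying the resulting identity by $P$ on the left and on the right yields $P\phi(I)=\phi(I)P$; applying this to the idempotents $P_1+U_{12}$ and $P_2+U_{21}$ gives $U_{12}\phi(I)=\phi(I)U_{12}$ and $U_{21}\phi(I)=\phi(I)U_{21}$ in one stroke, which together with diagonality is exactly centrality.

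The genuine gap is in the second half. After subtracting only the multiplier, your map $\delta=\phi-\eta$ satisfies $\delta(I)=0$ but not $\delta(P_1)=\delta(P_2)=0$: what survives is the purely off-diagonal element $\delta(P_1)=P_1\delta(P_1)P_2+P_2\delta(P_1)P_1=-\delta(P_2)$. Because of this, $\delta$ does \emph{not} preserve the Peirce decomposition --- for instance, applying $(\mathbb{P})$ to $U_{11}P_2=P_2U_{11}=0$ gives $\delta(U_{11})\circ P_2+U_{11}\circ\delta(P_2)=0$, and the unknown term $U_{11}\circ\delta(P_2)$ contaminates every component comparison you plan to make --- so the ``partial Leibniz rules'' you list are not directly accessible for this $\delta$. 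The paper's key device, which your plan is missing, is a further normalization by an \emph{inner derivation}: set $T=P_1\phi(P_1)P_2+P_2\phi(P_2)P_1$ and work with $\varphi(U)=\phi(U)-\phi(I)U+[T,U]$. Then $\varphi(P_1)=\varphi(P_2)=0$, and $\varphi$ still satisfies $(\mathbb{P})$ because $[T,U]\circ V+U\circ[T,V]=[T,U\circ V]=0$ whenever $UV=VU=0$. Only after this step do the clean containments $\varphi(\mathcal{U}_{11})\subset\mathcal{U}_{11}$, $\varphi(\mathcal{U}_{22})\subset\mathcal{U}_{22}$, $P_i\varphi(\mathcal{U}_{12})P_i=P_i\varphi(\mathcal{U}_{21})P_i=0$ hold, and the eight identities $\varphi(U_{ij}\circ V_{kl})=\varphi(U_{ij})\circ V_{kl}+U_{ij}\circ\varphi(V_{kl})$ (note: Jordan identities, not the one-sided Leibniz rules you wrote) follow from zero products such as $(U_{11}-U_{11}V_{12})(V_{12}+P_2)=(V_{12}+P_2)(U_{11}-U_{11}V_{12})=0$ and $(U_{12}V_{21}+U_{12}+V_{21}+P_2)(V_{21}U_{12}-V_{21}-U_{12}+P_1)=0$. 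Since $[T,\cdot\,]$ is itself a derivation, $\delta=\varphi-[T,\cdot\,]$ remains a Jordan derivation, so the stated conclusion is unaffected; but without introducing this inner-derivation correction your component analysis will not close up.
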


To show the result, we need  the following several lemmas.
\begin{lemma}\
$\phi(I)\in~Z(\mathcal{U})$.
\end{lemma}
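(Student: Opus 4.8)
The plan is to show that $\phi(I)$ is diagonal with respect to the Peirce decomposition $\mathcal{U}=\mathcal{U}_{11}+\mathcal{U}_{12}+\mathcal{U}_{21}+\mathcal{U}_{22}$ and that its two diagonal blocks satisfy exactly the module identities appearing in the description of $Z(\mathcal{U})$. The engine of the whole argument is a single compression identity extracted from $(\mathbb{P})$: whenever $e\in\mathcal{U}$ is an idempotent with complementary idempotent $f=I-e$, we have $ef=fe=0$, so $(\mathbb{P})$ applies and gives $\phi(e)\circ f+e\circ\phi(f)=0$. Multiplying this relation on the left by $e$ and on the right by $f$, and using $e^2=e$, $f^2=f$, $ef=fe=0$ together with the additivity relation $\phi(e)+\phi(f)=\phi(e+f)=\phi(I)$, every term that involves $\phi(e)$ or $\phi(f)$ separately collapses, and one is left precisely with $e\,\phi(I)\,f=0$. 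The symmetric compression (left by $f$, right by $e$) yields $f\,\phi(I)\,e=0$. Thus for \emph{every} idempotent $e$ the element $\phi(I)$ has vanishing $(e,f)$ and $(f,e)$ corners.

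First I would specialize to $e=P_{1}$, $f=P_{2}$. This gives $P_{1}\phi(I)P_{2}=0$ and $P_{2}\phi(I)P_{1}=0$, so that $\phi(I)=P_{1}\phi(I)P_{1}+P_{2}\phi(I)P_{2}$ is diagonal; I will write $A_{0}=P_{1}\phi(I)P_{1}\in\mathcal{U}_{11}$ and $B_{0}=P_{2}\phi(I)P_{2}\in\mathcal{U}_{22}$ for its two blocks. Next I would feed in the perturbed idempotents that are standard in this setting. For $M\in\mathcal{U}_{12}$ the element $e=P_{1}+M$ is idempotent, because $\mathcal{U}_{12}$ squares to zero while $P_{1}M=M$ and $MP_{1}=0$, and its complement is $f=P_{2}-M$. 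Substituting the diagonal form of $\phi(I)$ into $e\,\phi(I)\,f=0$ isolates the $(1,2)$ entry and produces $A_{0}M=MB_{0}$. Symmetrically, for $N\in\mathcal{U}_{21}$ the element $e=P_{1}-N$ is idempotent with complement $f=P_{2}+N$, and the identity $f\,\phi(I)\,e=0$ yields $NA_{0}=B_{0}N$. As $M$ and $N$ range over $\mathcal{U}_{12}\cong\mathcal{M}$ and $\mathcal{U}_{21}\cong\mathcal{N}$, these are exactly the defining conditions of $Z(\mathcal{U})$, so $\phi(I)\in Z(\mathcal{U})$.

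The step that needs the most care, and which I regard as the real content of the lemma, is the compression: multiplying by $e$ and $f$ on the two sides is engineered precisely to annihilate the individually uncontrolled quantities $\phi(e)$ and $\phi(f)$ while preserving only their sum $\phi(I)$, so that the resulting identity speaks about $\phi(I)$ alone. Everything else is bookkeeping: checking that $P_{1}+M$ and $P_{1}-N$ really are idempotent, and that the mixed products $P_{1}M,\,MP_{2},\,P_{2}N,\,NP_{1}$ behave as dictated by the Peirce relations, is routine. I would also remark that this particular lemma does not use the $2$-torsion freeness of $\mathcal{A}$ and $\mathcal{B}$, since no stray factors of $2$ survive the compressions; that hypothesis will instead be needed later, when the behaviour of $\phi$ on the individual corners $\mathcal{U}_{ij}$ is analysed.
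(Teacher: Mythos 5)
Your proof is correct and follows essentially the same route as the paper: apply $(\mathbb{P})$ to a complementary pair of idempotents $e, I-e$, deduce that $\phi(I)$ commutes with every idempotent (your corner identities $e\phi(I)(I-e)=(I-e)\phi(I)e=0$ are equivalent to the paper's $P\phi(I)=\phi(I)P$), and then specialize to $P_1$ and to the perturbed idempotents $P_1+M$, $P_1-N$ to recover the defining relations of $Z(\mathcal{U})$. The only difference is cosmetic: you obtain the key identity by a single two-sided compression, whereas the paper multiplies one-sidedly by $P$ twice and subtracts; both avoid any division by $2$, confirming your remark that $2$-torsion freeness is not used here.
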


\begin{proof}
Let $P$ be an arbitrary idempotent in $\mathcal{U}$, and
$P(I-P)=(I-P)P=0.$
By assumption, it follows that
$$\phi(P)\circ(I-P)+P\circ\phi(I-P)=0.$$
Hence
$$2\phi(P)+P\phi(I)+\phi(I)P=2\phi(P)P+2P\phi(P).$$
Multiplying the above equality on the left and right by $P$
respectively, we have that
$$P\phi(I)+P\phi(I)P=2P\phi(P)P$$
and $$\phi(I)P+P\phi(I)P=2P\phi(P)P.$$
Thus
\begin{align}
 P\phi(I)=\phi(I)P.\label{202}
\end{align}
In particular,
\begin{align}
\phi(I)=P_{1}\phi(I)P_{1}+P_{2}\phi(I)P_{2}.\label{203}
\end{align}
Let  $U_{ij}$ be any elements in $\mathcal{U}_{ij}$  $(
i,j=1,2),$ respectively.
Since $P_{1}+U_{12}$ and $P_{2}+U_{21}$ are idempotents in
$~\mathcal{U},$  we have
$$(P_{1}+U_{12})\phi(I)=\phi(I)(P_{1}+U_{12})$$
and $$(P_{2}+U_{21})\phi(I)=\phi(I)(P_{2}+U_{21}).$$
By \eqref{202},  we have that
\begin{align}
U_{12}\phi(I)=\phi(I)U_{12}\label{299}
\end{align}
and
\begin{align}
U_{21}\phi(I)=\phi(I)U_{21}.\label{288}
\end{align}
By \eqref{203}, \eqref{299} and \eqref{288},  it follows that
$$P_{1}\phi(I)P_{1}U_{12}=U_{12}P_{2}\phi(I)P_{2}.$$
Similarly,
$$P_{2}\phi(I)P_{2}U_{21}=U_{21}P_{1}\phi(I)P_{1}.$$
Hence  $$ \phi(I)\in Z(\mathcal{U}).$$
The proof is complete.
\end{proof}

\begin{lemma}
$P_{1}\phi(P_{2})P_{1}=P_{2}\phi(P_{1})P_{2}=0.$
\end{lemma}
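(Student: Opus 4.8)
The plan is to apply the defining condition $(\mathbb{P})$ directly to the orthogonal idempotents $P_{1}$ and $P_{2}$. Since $P_{1}P_{2}=P_{2}P_{1}=0$, the hypothesis on $\phi$ gives
$$\phi(P_{1})\circ P_{2}+P_{1}\circ\phi(P_{2})=0,$$
which, upon expanding the two Jordan products, reads
$$\phi(P_{1})P_{2}+P_{2}\phi(P_{1})+P_{1}\phi(P_{2})+\phi(P_{2})P_{1}=0.$$
I will refer to this single identity as $(\star)$, and I will extract the two desired vanishings from it by suitable one-sided multiplications together with the 2-torsion freeness of $\mathcal{A}$ and $\mathcal{B}$.

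First I would compress $(\star)$ from the left and from the right by $P_{1}$. Using $P_{1}P_{2}=P_{2}P_{1}=0$, the two terms of $(\star)$ containing $\phi(P_{1})$ both vanish (each carries a factor $P_{1}P_{2}$ or $P_{2}P_{1}$), while the two terms containing $\phi(P_{2})$ both collapse to $P_{1}\phi(P_{2})P_{1}$ because $P_{1}^{2}=P_{1}$. Hence $2P_{1}\phi(P_{2})P_{1}=0$. Since $P_{1}\phi(P_{2})P_{1}$ lies in $\mathcal{U}_{11}\cong\mathcal{A}$ and $\mathcal{A}$ is 2-torsion free, this forces $P_{1}\phi(P_{2})P_{1}=0$. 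Compressing $(\star)$ symmetrically from both sides by $P_{2}$ annihilates the two $\phi(P_{2})$-terms and leaves $2P_{2}\phi(P_{1})P_{2}=0$; invoking the 2-torsion freeness of $\mathcal{B}\cong\mathcal{U}_{22}$ then yields $P_{2}\phi(P_{1})P_{2}=0$, completing both claims.

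There is essentially no serious obstacle here: the argument is a direct specialization of $(\mathbb{P})$ to a single orthogonal pair. The only point demanding care is bookkeeping the two-sided cut — verifying precisely which of the four summands in $(\star)$ survive each multiplication, and confirming that the surviving coefficient is exactly $2$ so that the torsion hypothesis applies cleanly. This lemma serves to kill the ``off-diagonal'' corner components $P_{1}\phi(P_{2})P_{1}$ and $P_{2}\phi(P_{1})P_{2}$, which simplifies the corner-by-corner analysis of $\phi$ in the lemmas that follow.
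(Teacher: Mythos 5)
Your proof is correct: applying $(\mathbb{P})$ to the orthogonal pair $(P_{1},P_{2})$ and compressing the resulting identity $(\star)$ by $P_{1}$ (resp.\ $P_{2}$) on both sides does leave exactly $2P_{1}\phi(P_{2})P_{1}=0$ (resp.\ $2P_{2}\phi(P_{1})P_{2}=0$), and the $2$-torsion freeness of $\mathcal{A}\cong\mathcal{U}_{11}$ and $\mathcal{B}\cong\mathcal{U}_{22}$ finishes the argument. This is essentially the paper's computation --- since $I-P_{1}=P_{2}$, your identity $(\star)$ is precisely the relation obtained in the proof of Lemma 2.2 with $P=P_{1}$ --- the only difference being that the paper routes the conclusion through $\phi(I)$ and the additivity $\phi(I)=\phi(P_{1})+\phi(P_{2})$ instead of compressing $(\star)$ directly.
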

\begin{proof}
By  \eqref{203},  we have that
$$ P_{1}\phi(I)P_{1}=P_{1}\phi(P_{1})P_{1}$$
and $$P_{2}\phi(I)P_{2}=P_{2}\phi(P_{2})P_{2}.$$
Hence
$$P_{1}\phi(P_{2})P_{1}=P_{2}\phi(P_{1})P_{2}=0.$$
The proof is complete.\end{proof}

For each $U$ in $\mathcal{U},$ in the following, we define an additive mapping  $\varphi:  \mathcal{U} \rightarrow \mathcal{U}~$  by
$$\varphi(U)=\phi(U)-\phi(I)U+[P_{1}\phi(P_{1})P_{2}+P_{2}\phi(P_{2})P_{1},U].$$
\begin{lemma}
$\varphi$ has the following properties:

$(1)$  ~~   $\varphi(P_{1})=\varphi(P_{2})=0;$

$(2)$   ~~$\varphi$  satisfying the
condition $(\mathbb{P}).$
\end{lemma}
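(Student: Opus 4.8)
The plan is to handle parts (1) and (2) separately, in both cases reducing everything to the Peirce decomposition relative to $P_1,P_2$ together with the facts already in hand: $\phi(I)\in Z(\mathcal{U})$ (Lemma 2.1), $P_1\phi(P_2)P_1=P_2\phi(P_1)P_2=0$ (Lemma 2.2), and the two identities $P_1\phi(I)P_1=P_1\phi(P_1)P_1$, $P_2\phi(I)P_2=P_2\phi(P_2)P_2$ extracted from the proof of Lemma 2.2. I will also use repeatedly that centrality of $\phi(I)$ combined with \eqref{203} forces the off-diagonal components $P_1\phi(I)P_2=P_2\phi(I)P_1=0$, and that $\phi(I)=\phi(P_1)+\phi(P_2)$ since $\phi$ is additive.

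For part (1) I would compute $\varphi(P_1)$ head-on. Writing $T=P_1\phi(P_1)P_2+P_2\phi(P_2)P_1$, a direct Peirce computation gives $[T,P_1]=P_2\phi(P_2)P_1-P_1\phi(P_1)P_2$ and $\phi(I)P_1=P_1\phi(I)P_1=P_1\phi(P_1)P_1$. Substituting into the definition of $\varphi$ and cancelling the $P_1\phi(P_1)P_1$ and $P_1\phi(P_1)P_2$ contributions against the matching Peirce components of $\phi(P_1)$, everything collapses to $\varphi(P_1)=P_2\phi(P_1)P_1+P_2\phi(P_2)P_1=P_2\phi(I)P_1=0$. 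The identity $\varphi(P_2)=0$ follows by the symmetric computation, using $[T,P_2]=P_1\phi(P_1)P_2-P_2\phi(P_2)P_1$ and $\phi(I)P_2=P_2\phi(P_2)P_2$. The only thing to watch here is the bookkeeping of the Peirce components and the signs in the commutators $[T,P_i]$.

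For part (2) the cleanest route is to observe that condition $(\mathbb{P})$ is \emph{linear} in the mapping: if several additive maps each satisfy $(\mathbb{P})$, then so does any $\mathbb{Z}$-linear combination, because the defining relation $\psi(U)\circ V+U\circ\psi(V)=0$ depends linearly on $\psi$. I therefore write $\varphi=\phi-\mu+\delta_T$, where $\mu(U)=\phi(I)U$ and $\delta_T(U)=[T,U]$, and verify the three summands individually. For $\phi$ this is the hypothesis. For $\mu$, fix $U,V$ with $UV=VU=0$; using centrality of $\phi(I)$ to commute it past every factor, $\mu(U)\circ V+U\circ\mu(V)$ expands to $2\phi(I)(UV+VU)=0$. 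For the inner derivation $\delta_T$, the same kind of expansion collapses $\delta_T(U)\circ V+U\circ\delta_T(V)$ to $TUV-VUT-UVT+TVU$, and each of these four products contains $UV$ or $VU$ as a factor and hence vanishes.

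The arguments are essentially bookkeeping, so I do not expect a genuine obstacle. The one point that must be treated with care is the verification that the two correction terms, the multiplier $\mu$ and the inner derivation $\delta_T$, each satisfy $(\mathbb{P})$ in their own right; once that is in place, the linearity of $(\mathbb{P})$ immediately yields the claim, and in part (1) the main risk is merely a sign slip or a mislabelled Peirce component in $[T,P_i]$.
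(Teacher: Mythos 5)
Your proof is correct and follows essentially the same route as the paper: a direct Peirce-component computation for $\varphi(P_1)=\varphi(P_2)=0$ using Lemma 2.3 and the centrality of $\phi(I)$, and a termwise verification that the multiplier and inner-derivation correction terms contribute nothing to $\varphi(U)\circ V+U\circ\varphi(V)$ when $UV=VU=0$. If anything, you are slightly more explicit than the paper, whose displayed computation for part (2) silently drops the $\phi(I)U$ terms (they do vanish by centrality of $\phi(I)$, exactly as you check).
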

\begin{proof}
By Lemma 2.3, it follows that
\begin{align*}
\varphi(P_{1})&=\phi(P_{1})-\phi(I)P_{1}+[P_{1}\phi(P_{1})P_{2}+P_{2}\phi(P_{2})P_{1},P_{1}]\notag\\
&=\phi(P_{1})-\phi(I)P_{1}+P_{2}\phi(P_{2})P_{1}-P_{1}\phi(P_{1})P_{2}\notag\\
&=\phi(P_{1})-\phi(P_{1})P_{1}-\phi(P_{2})P_{1}+P_{2}\phi(P_{2})P_{1}-P_{1}\phi(P_{1})P_{2}\notag\\
&=\phi(P_{1})P_{2}-\phi(P_{2})P_{1}+P_{2}\phi(P_{2})P_{1}-P_{1}\phi(P_{1})P_{2}\notag\\
&=0.\notag
\end{align*}
Similarly, we can show $\varphi(P_{2})=0.$ Hence $(1)$ holds.

Let $UV=VU=0,$  by the assumption and Lemma 2.2, it follows that
\begin{align*}
\varphi(U)\circ V&+U\circ \varphi(V)\notag\\
&=\phi(U)\circ V+U\circ \phi(V)+[T,U]\circ V
+U\circ[T,V]\notag\\
&=0+[T,U]\circ V+U\circ[T,V]\notag\\
&=0,\notag
\end{align*}
where $T=P_{1}\phi(P_{1})P_{2}+P_{2}\phi(P_{2})P_{1}.$
Hence $(2)$ holds.
\end{proof}

\begin{lemma}\
 $\varphi$ has the following properties:

 $({1})$  ~~    $ \varphi(U_{11}) \in \mathcal{U}_{11};$

$({2})$  ~~    $  \varphi(U_{22}) \in \mathcal{U}_{22};$

$({3})$ ~~
$P_{1}\varphi(U_{12})P_{1}=P_{2}\varphi(U_{12})P_{2}=0 ;$

$({4})$ ~~
$P_{1}\varphi(U_{21})P_{1}=P_{2}\varphi(U_{21})P_{2}=0 .$

\end{lemma}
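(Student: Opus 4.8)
The plan is to feed carefully chosen zero-product pairs into condition $(\mathbb{P})$ and then read off the Peirce corners $P_i(\,\cdot\,)P_j$, discarding the factors of $2$ that appear on the diagonal by invoking the $2$-torsion freeness of $\mathcal{A}$ and $\mathcal{B}$. Throughout I would use only the two facts from Lemma 2.4: that $\varphi(P_1)=\varphi(P_2)=0$ and that $\varphi$ again satisfies $(\mathbb{P})$.

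For $(1)$ and $(2)$ the natural zero products already suffice. Since $U_{11}P_2=P_2U_{11}=0$ and $\varphi(P_2)=0$, applying $(\mathbb{P})$ to the pair $(U_{11},P_2)$ gives $\varphi(U_{11})\circ P_2=0$, that is $\varphi(U_{11})P_2+P_2\varphi(U_{11})=0$. Writing $\varphi(U_{11})=\sum_{i,j}P_i\varphi(U_{11})P_j$ and projecting this identity onto the four corners yields $P_1\varphi(U_{11})P_2=0$, $P_2\varphi(U_{11})P_1=0$ and $2\,P_2\varphi(U_{11})P_2=0$; since $\mathcal{B}$ is $2$-torsion free the last corner vanishes as well, leaving $\varphi(U_{11})=P_1\varphi(U_{11})P_1\in\mathcal{U}_{11}$. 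Statement $(2)$ is identical, using the pair $(U_{22},P_1)$ together with $\varphi(P_1)=0$ and the $2$-torsion freeness of $\mathcal{A}$.

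For $(3)$ and $(4)$ the right test elements are complementary idempotents built from the module entries. Given $U_{12}\in\mathcal{U}_{12}$, one checks that $P_1+U_{12}$ and $P_2-U_{12}$ are idempotents summing to $I$ with $(P_1+U_{12})(P_2-U_{12})=(P_2-U_{12})(P_1+U_{12})=0$. Applying $(\mathbb{P})$ to this pair, and using $\varphi(P_1)=\varphi(P_2)=0$ and additivity so that $\varphi(P_1+U_{12})=\varphi(U_{12})=-\varphi(P_2-U_{12})$, I would expand the Jordan products and collect terms into the identity
\[
\varphi(U_{12})P_2+P_2\varphi(U_{12})-P_1\varphi(U_{12})-\varphi(U_{12})P_1-2\,\varphi(U_{12})U_{12}-2\,U_{12}\varphi(U_{12})=0.
\]
Projecting onto the $(1,1)$ and $(2,2)$ corners then produces the two relations $P_1\varphi(U_{12})P_1=-U_{12}\,P_2\varphi(U_{12})P_1$ and $P_2\varphi(U_{12})P_2=P_2\varphi(U_{12})P_1\,U_{12}$.

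The hard part will be that a single idempotent pair yields only these mixed relations, in which the diagonal corner (linear in $U_{12}$) is expressed through a term that is quadratic in $U_{12}$, and so does not obviously vanish. To separate the two homogeneity degrees I would polarize: replacing $U_{12}$ by $2U_{12}$ and using additivity of $\varphi$ scales the left-hand side by $2$ but the right-hand side by $4$. Subtracting twice the original relation gives $2\,U_{12}\,P_2\varphi(U_{12})P_1=0$, whence $U_{12}\,P_2\varphi(U_{12})P_1=0$ by $2$-torsion freeness of $\mathcal{A}$, and therefore $P_1\varphi(U_{12})P_1=0$; the symmetric computation in the $(2,2)$ corner, using $2$-torsion freeness of $\mathcal{B}$, gives $P_2\varphi(U_{12})P_2=0$. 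Statement $(4)$ follows by the same argument applied to the idempotents $P_2+U_{21}$ and $P_1-U_{21}$ for $U_{21}\in\mathcal{U}_{21}$.
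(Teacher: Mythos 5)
Your proof is correct. Parts $(1)$ and $(2)$ are exactly the paper's argument: apply $(\mathbb{P})$ to $(U_{11},P_{2})$, use $\varphi(P_{2})=0$, and project onto the four corners, invoking $2$-torsion freeness for the $(2,2)$ corner. For parts $(3)$ and $(4)$ you use the same complementary idempotents as the paper (e.g.\ $P_{2}+U_{21}$ and $P_{1}-U_{21}$), but you diverge in how the terms quadratic in the off-diagonal element are eliminated. The paper first applies $(\mathbb{P})$ to the pair $(U_{21},U_{21})$ itself --- legitimate since $U_{21}U_{21}=0$ --- which yields $2\varphi(U_{21})U_{21}+2U_{21}\varphi(U_{21})=0$ and so cancels the quadratic terms \emph{before} projecting, leaving the clean identity $P_{1}\varphi(U_{21})+\varphi(U_{21})P_{1}=P_{2}\varphi(U_{21})+\varphi(U_{21})P_{2}$ whose diagonal corners give the claim at once. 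You instead carry the quadratic terms through the corner projections and then remove them by the scaling $U_{12}\mapsto 2U_{12}$ together with additivity and $2$-torsion freeness. Both devices are sound and equally elementary; the paper's auxiliary zero product is a one-line shortcut, while your polarization argument is slightly longer but has the merit of using only the single idempotent pair. Your corner identities $P_{1}\varphi(U_{12})P_{1}=-U_{12}P_{2}\varphi(U_{12})P_{1}$ and $P_{2}\varphi(U_{12})P_{2}=P_{2}\varphi(U_{12})P_{1}U_{12}$ check out, and the homogeneity argument does close the gap you flagged.
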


\begin{proof}
Since  $U_{11}P_{2}=P_{2}U_{11}=0,$ by Lemma 2.4, we have that
\begin{align}
P_{2}\varphi(U_{11})+\varphi(U_{11})P_{2}=0. \label{204}
\end{align}
Multiplying \eqref{204} on the left by $P_{1}$, we have that
$$P_{1}\varphi(U_{11})P_{2}=0.$$
Multiplying \eqref{204} on the right by $P_{1}$, we have that
$$P_{2}\varphi(U_{11})P_{1}=0.$$
Multiplying \eqref{204} on the left and  right by $P_{2}$
respectively,
we have that
$$P_{2}\varphi(U_{11})P_{2}=0.$$
Hence we obtain  $$\varphi(U_{11})\in \mathcal{U}_{11}.$$
Similarly,   $$\varphi(U_{22}) \in \mathcal{U}_{22}.$$
Since  $U_{21}U_{21}=U_{21}U_{21}=0$ , we have
 $$2\varphi(U_{21})U_{21}+2U_{21}\varphi(U_{21})=0,$$
 and$$(P_{2}+U_{21})(P_{1}-U_{21})=(P_{1}-U_{21})(P_{2}+U_{21})=0.$$
We have that
\begin{align}P_{1}\varphi(U_{21})+\varphi(U_{21})P_{1}=P_{2}\varphi(U_{21})+\varphi(U_{21})P_{2}.\label{205}
\end{align}
Multiplying \eqref{205} on the left and right by $P_{1}$
respectively,
we have that  $$P_{1}\varphi(U_{21})P_{1}=0.$$
Multiplying \eqref{205} on the left and  right by $P_{2}$
respectively, we have
$$P_{2}\varphi(U_{21})P_{2}=0.$$
Similarly, $$P_{1}\varphi(U_{12})P_{1}=P_{2}\varphi(U_{12})P_{2}=0.$$
The proof is complete.
\end{proof}
\begin{lemma}
Let $U_{ij}$ and $V_{ij}$ be arbitrary elements in $\mathcal{U}_{ij}$
(i,j=1,2), respectively.
$\varphi$   has the following properties:

 $({1})$ ~~   $ \varphi (U_{11}\circ V_{12})= \varphi(U_{11})\circ V_{12}+U_{11}\circ \varphi(V_{12});$

 $({2})$ ~~   $ \varphi (U_{22}\circ V_{21})= \varphi(U_{22})\circ V_{21}+U_{22}\circ \varphi(V_{21});$

 $({3})$ ~~   $ \varphi (U_{11}\circ V_{21})= \varphi(U_{11})\circ V_{21}+U_{11}\circ \varphi(V_{21});$

$({4})$ ~~  $ \varphi (U_{22}\circ V_{12})=
 \varphi(U_{22})\circ V_{12}+U_{22}\circ \varphi(V_{12});$

 $({5})$ ~~   $ \varphi (U_{11}\circ V_{11})= \varphi(U_{11})\circ V_{11}+U_{11}\circ \varphi(V_{11});$

 $({6})$ ~~   $ \varphi (U_{22}\circ V_{22})= \varphi(U_{22})\circ V_{22}+U_{22}\circ \varphi(V_{22});$

 $({7})$ ~~   $ \varphi (U_{12}\circ V_{21})= \varphi(U_{12})\circ V_{21}+U_{12}\circ \varphi(V_{21});$

 $({8})$ ~~   $ \varphi (U_{21}\circ V_{12})= \varphi(U_{21})\circ V_{12}+U_{21}\circ \varphi(V_{12}).$
\end{lemma}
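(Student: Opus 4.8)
The plan is to prove all eight identities by the same mechanism: starting from a product of matrix units that vanishes in both orders, apply the condition $(\mathbb{P})$ satisfied by $\varphi$ (Lemma 2.4(2)) together with the location information for $\varphi$ collected in Lemma 2.6, and then read off the Jordan-derivation-type identity by projecting onto the appropriate corners $\mathcal{U}_{ij}$. The key observation that makes $(\mathbb{P})$ applicable is that for off-diagonal elements the relevant products genuinely vanish: for instance $U_{12}V_{12}=V_{12}U_{12}=0$ and $U_{21}V_{21}=V_{21}U_{21}=0$, and more usefully, cross-corner products like $U_{11}V_{12}$ sit in $\mathcal{U}_{12}$ while $V_{12}U_{11}=0$. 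So to exploit $(\mathbb{P})$ I must manufacture, for each target identity, a pair $S,T$ with $ST=TS=0$ whose Jordan relation unpacks into the desired one.

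First I would treat items $(1)$--$(4)$, the mixed diagonal-times-off-diagonal cases. For $(1)$, I would use that $(P_2-U_{11})$ or a shifted idempotent annihilates $V_{12}$ on one side; concretely, since $V_{12}U_{11}=0=U_{11}\cdot(\text{something in }\mathcal U_{22})$, I can choose the vanishing pair so that $(\mathbb P)$ yields $\varphi(U_{11})\circ V_{12}+U_{11}\circ\varphi(V_{12})=\varphi(U_{11}V_{12})+\varphi(V_{12}U_{11})$; using $V_{12}U_{11}=0$ and the fact from Lemma 2.6 that $\varphi(V_{12})$ has no $(1,1)$ or $(2,2)$ component, the extraneous terms collapse and the identity emerges after multiplying by $P_1,P_2$ on suitable sides. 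Items $(2)$, $(3)$, $(4)$ are the symmetric counterparts obtained by swapping the roles of $P_1\leftrightarrow P_2$ and $\mathcal U_{12}\leftrightarrow\mathcal U_{21}$, so I would prove $(1)$ in detail and indicate that $(2)$--$(4)$ follow verbatim.

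Next I would handle the diagonal-diagonal cases $(5)$ and $(6)$ and the off-diagonal-off-diagonal cases $(7)$ and $(8)$. For $(5)$, the subtlety is that $U_{11}V_{11}$ need not vanish, so $(\mathbb P)$ is not directly available for $U_{11},V_{11}$ themselves; instead I would introduce an off-diagonal idempotent-type element to linearize, i.e. consider elements such as $P_1+X_{12}$ and products that force a vanishing relation, then combine with the already-proved identities $(1)$ and $(4)$ to transfer the multiplicativity from the mixed cases into the pure $\mathcal U_{11}$ corner, using 2-torsion freeness of $\mathcal A$ to cancel the factor $2$ that appears when both arguments land in the same corner. For $(7)$, since $U_{12}V_{21}\in\mathcal U_{11}$ and $V_{21}U_{12}\in\mathcal U_{22}$, I would exploit the pair $U_{12},V_{21}$ through a relation like $(P_1+U_{12})$ against $(P_2+V_{21})$ adjusted to vanish, again reading off corners and invoking Lemma 2.6 to discard the wrong-corner pieces of $\varphi(U_{12})$ and $\varphi(V_{21})$.

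The main obstacle I anticipate is items $(5)$, $(6)$, $(7)$, $(8)$, where the naive vanishing product is unavailable and one must build an auxiliary idempotent relation and then separate corners while repeatedly applying 2-torsion freeness to strip the doubling inherent in the Jordan product $A\circ A=2A^2$. The mixed cases $(1)$--$(4)$ should be routine once the correct zero-product pair is chosen; the bookkeeping of which corner each term occupies, guided entirely by Lemma 2.6, is what keeps the argument honest.
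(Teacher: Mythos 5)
Your overall strategy coincides with the paper's: manufacture pairs with vanishing products in both orders, apply condition $(\mathbb{P})$, and separate corners using the location information from Lemma 2.5. But the actual content of the proof lies in exhibiting those pairs, and this is where your sketch has a genuine gap. The one concrete candidate you offer for item $(1)$, namely $P_{2}-U_{11}$ against $V_{12}$, does not work: $(P_{2}-U_{11})V_{12}=-U_{11}V_{12}$, which is nonzero in general, so $(\mathbb{P})$ cannot be invoked. The pair that actually works is $U_{11}-U_{11}V_{12}$ and $V_{12}+P_{2}$ (one checks both products vanish), and finding such correction terms is the nontrivial step; similarly for $(7)$ your ``$(P_{1}+U_{12})$ against $(P_{2}+V_{21})$ adjusted to vanish'' leaves the adjustment unspecified, whereas the required pair is $U_{12}V_{21}+U_{12}+V_{21}+P_{2}$ against $V_{21}U_{12}-V_{21}-U_{12}+P_{1}$. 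Saying ``I can choose the vanishing pair so that $(\mathbb{P})$ yields the desired identity'' assumes exactly what must be constructed.

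The second gap concerns items $(5)$ and $(6)$. The paper's argument applies the already-proved item $(1)$ to $\varphi(U_{11}V_{11}V_{12})$ in two ways, arriving at
\[
\bigl(\varphi(U_{11}V_{11})-\varphi(U_{11})V_{11}-U_{11}\varphi(V_{11})\bigr)V_{12}=0
\]
for every $V_{12}$, and then concludes because $\mathcal{M}$ is a \emph{faithful} left $\mathcal{A}$-module. You never invoke faithfulness; instead you appeal to $2$-torsion freeness ``to cancel the factor $2$,'' but $2$-torsion freeness is not the operative hypothesis here (it is what is needed in $(7)$ and $(8)$, where the diagonal corners each pick up a coefficient $2$). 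Without the faithfulness of $\mathcal{M}$ the diagonal cases do not close, so this is a missing idea rather than a stylistic difference.
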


\begin{proof}
Since$$(U_{11}-U_{11}V_{12})(V_{12}+P_{2})=(V_{12}+P_{2})(U_{11}-U_{11}V_{12})=0,$$
by Lemma 2.4, it follows that
$$\varphi(U_{11}-U_{11}V_{12})\circ(V_{12}+P_{2})+(U_{11}-U_{11}V_{12})\circ
\varphi(V_{12}+P_{2})=0.$$
We have that
\begin{align*}
(-U_{11}V_{12}\varphi(V_{12})&-\varphi(V_{12})U_{11}V_{12}-V_{12}\varphi(U_{11}V_{12}))+(\varphi(U_{11})V_{12}-\varphi(U_{11}V_{12})P_{2}\notag\\
&+U_{11}\varphi(V_{12}))+(-P_{2}\varphi(U_{11}V_{12})+\varphi(V_{12})U_{11})+(-\varphi(U_{11}V_{12})V_{12})\notag\\
&=0.\notag
\end{align*}
By Lemma 2.5, we have that
$$\varphi(U_{11})V_{12}-\varphi(U_{11}V_{12})P_{2}+U_{11}\varphi(V_{12})\in \mathcal{U}_{12},$$
and
$$-P_{2}\varphi(U_{11}V_{12})+\varphi(V_{12})U_{11}\in\mathcal{U}_{21}.$$
Thus
\begin{align}
\varphi(U_{11})V_{12}-\varphi(U_{11}V_{12})P_{2}+U_{11}\varphi(V_{12})-P_{2}\varphi(U_{11}V_{12})+\varphi(V_{12})U_{11}=0.\label{456}
\end{align}
By Lemma 2.5, we have that
\begin{align}
\varphi(U_{11}V_{12})P_{2}+P_{2}\varphi(U_{11}V_{12})=\varphi(U_{11}V_{12}).\label{206}
\end{align}
By Lemma 2.5,  \eqref{206} and \eqref{456}, we obtain
\begin{align}
\varphi(U_{11}\circ V_{12})=\varphi(U_{11}) \circ V_{12}+U_{11}\circ\varphi(V_{12}).\label{458}
\end{align}
Hence $({1})$  holds.

Since
$$(V_{21}-P_{2})(V_{21}U_{11}+U_{11})=(V_{21}U_{11}+U_{11})(V_{21}-P_{2})=0,$$
by Lemma 2.4, it follows that
$$\varphi(V_{21}-P_{2}) \circ(V_{21}U_{11}+U_{11})+(V_{21}-P_{2})
\circ \varphi(V_{21}U_{11}+U_{11})=0.$$
Hence
\begin{align*}
(\varphi(V_{21})V_{21}U_{11}&+\varphi(V_{21}U_{11})V_{21})+(U_{11}\varphi(V_{21})-\varphi(V_{21}U_{11})P_{2})\notag\\
&+(\varphi(V_{21})U_{11}-P_{2}\varphi(V_{21}U_{11})+V_{21}\varphi(U_{11}))\notag\\
&+(V_{21}U_{11}\varphi(V_{21})+V_{21}\varphi(V_{21}U_{11}))=0.\notag
\end{align*}
By Lemma 2.5, we have that
$$U_{11}\varphi(V_{21})-\varphi(V_{21}U_{11})P_{2} \in\mathcal{U}_{12},$$
and
$$\varphi(V_{21})U_{11}-P_{2}\varphi(V_{21}U_{11})+V_{21}\varphi(U_{11})
\in \mathcal{U}_{21},$$
it follows that
\begin{align}
U_{11}\varphi(V_{21})-\varphi(V_{21}U_{11})P_{2}+\varphi(V_{21})U_{11}-P_{2}\varphi(V_{21}U_{11})+V_{21}\varphi(U_{11})=0.\label{457}
\end{align}
By Lemma 2.5, we have that
 \begin{align}
\varphi(V_{21}U_{11})P_{2}+P_{2}\varphi(V_{21}U_{11})=\varphi(V_{21}U_{11}).\label{207}
\end{align}
By Lemma 2.5,  \eqref{457} and \eqref{207},  we obtain
$$\varphi(U_{11}\circ V_{21})=\varphi(U_{11}) \circ
V_{21}+U_{11}\circ\varphi(V_{21}).$$
Hence $({3})$  holds.

Let  $V_{12}$  be an arbitrary element in
$\mathcal{U}_{12}.$  Since
$$\varphi(U_{11}V_{11}V_{12})=\varphi(U_{11}V_{11} \circ V_{12}),$$
by \eqref{458},  we have that
\begin{align}
\varphi(U_{11}V_{11}V_{12})=\varphi(U_{11}V_{11})V_{12}+U_{11}V_{11}\varphi(V_{12})+\varphi(V_{12})U_{11}V_{11},\label{208}
\end{align}
on the other hand, $$\varphi(U_{11}V_{11}V_{12})=\varphi(U_{11}\circ
V_{11}V_{12}).$$
By Lemma 2.5 and \eqref{458} it follows that
\begin{align}
\varphi(U_{11}V_{11}V_{12})=\varphi(U_{11})V_{11}V_{12}+U_{11}\varphi(V_{11})V_{12}+U_{11}V_{11}\varphi(V_{12})+\varphi(V_{12})V_{11}U_{11}.\label{209}
\end{align}
By \eqref{208} and \eqref{209}, we have that
$$(\varphi(U_{11}V_{11})-\varphi(U_{11})V_{11}-U_{11}\varphi(V_{11}))V_{12}+\varphi(V_{12})U_{11}V_{11}-\varphi(V_{12})V_{11}U_{11}=0.$$
By Lemma 2.5, we have that
$$(\varphi(U_{11}V_{11})-\varphi(U_{11})V_{11}-U_{11}\varphi(V_{11}))V_{12}=0.$$
By  assumption, $~\mathcal{U}_{12}$ is a left faithful $\mathcal{U}_{11}$-module. It follows that
$$\varphi(U_{11}V_{11})=\varphi(U_{11})V_{11}+U_{11}\varphi(V_{11}).$$
Hence $({5})$  holds.

Since
\begin{align*}
(U_{12}V_{21}&+U_{12}+V_{21}+P_{2})(V_{21}U_{12}-V_{21}-U_{12}+P_{1})\notag\\
&=(V_{21}U_{12}-V_{21}-U_{12}+P_{1})(U_{12}V_{21}+U_{12}+V_{21}+P_{2})\notag\\
&=0.\notag
\end{align*}
By Lemma 2.4, it follows that
\begin{align*}
\varphi(U_{12}V_{21}&+U_{12}+V_{21}+P_{2})
\circ(V_{21}U_{12}-V_{21}-U_{12}+P_{1})\notag\\
&+(U_{12}V_{21}+U_{12}+V_{21}+P_{2}) \circ
\varphi(V_{21}U_{12}-V_{21}-U_{12}+P_{1})\notag\\
&=0.\notag
\end{align*}
It follows that
\begin{align*}
2(\varphi(U_{12}V_{21})&-\varphi(U_{12})V_{21}-U_{12}\varphi(V_{21}))+(-\varphi(U_{12}V_{21})U_{12}\notag\\
&+\varphi(U_{12})V_{21}U_{12}+\varphi(V_{21})V_{21}U_{12}+P_{1}\varphi(U_{12})+P_{1}\varphi(V_{21})\notag\\
&+U_{12}\varphi(V_{21}U_{12})-U_{12}V_{21}\varphi(V_{21})-U_{12}V_{21}\varphi(U_{12})-\varphi(V_{21})P_{2}\notag\\
&-\varphi(U_{12})P_{2})+(\varphi(U_{12})P_{1}+\varphi(V_{21})P_{1}-V_{21}\varphi(U_{12}V_{21})\notag\\
&+V_{21}U_{12}\varphi(U_{12})+V_{21}U_{12}\varphi(V_{21})-P_{2}\varphi(V_{21})+\varphi(V_{21}U_{12})V_{21}\notag\\
&-\varphi(V_{21})U_{12}V_{21}-\varphi(U_{12})U_{12}V_{21})+2(\varphi(V_{21}U_{12})-\varphi(V_{21})U_{12}\notag\\
&-V_{21}\varphi(U_{12}))=0.\notag
\end{align*}
By Lemma 2.5, it follows that
$$0=2(\varphi(U_{12}V_{21})-\varphi(U_{12})V_{21}-U_{12}\varphi(V_{21}))\in  \mathcal{U}_{11},$$
and
$$0=2(\varphi(V_{21}U_{12})-\varphi(V_{21})U_{12}-V_{21}\varphi(U_{12})\in \mathcal{U}_{22}.$$
By assumption, $\mathcal{U}_{11}$ and $\mathcal{U}_{22}$  are
2-torsion free.  Thus
$$\varphi(U_{12}V_{21})-\varphi(U_{12})V_{21}-U_{12}\varphi(V_{21})=0$$
and
$$\varphi(V_{21}U_{12})-\varphi(V_{21})U_{12}-V_{21}\varphi(U_{12})=0.$$
Hence  $({7})$  holds.

Similarly, we can prove that $({2}),$  $({4}),$ $({6})$
and $({8}).$ The proof is complete.
\end{proof}

\begin{lemma}
$\varphi$ is a Jordan derivation.

\end{lemma}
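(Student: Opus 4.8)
**The plan is to show that $\varphi$ satisfies the defining Jordan identity $\varphi(U^2)=\varphi(U)U+U\varphi(U)$ for every $U\in\mathcal{U}$**, using the eight "partial Leibniz" relations already established in Lemma 2.6. Write a general element as $U=U_{11}+U_{12}+U_{21}+U_{22}$ with $U_{ij}\in\mathcal{U}_{ij}$. Expanding $U^2$ and collecting terms by their Peirce position, the square decomposes into a sum of products $U_{ij}U_{kl}$, each of which is (up to the $2$-torsion-free cancellation) captured by one of the Jordan-product identities $(1)$--$(8)$ of Lemma 2.6 together with the bimodule structure. Since $\varphi$ is additive, it suffices to verify the Jordan identity on each homogeneous piece and on the relevant cross terms, then reassemble. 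First I would record the ``diagonal'' cases: from Lemma 2.6$(5)$ and $(6)$, $\varphi$ is a Jordan derivation on each of the subrings $\mathcal{U}_{11}$ and $\mathcal{U}_{22}$ separately, so $\varphi(U_{11}^2)=\varphi(U_{11})U_{11}+U_{11}\varphi(U_{11})$ and likewise for $U_{22}$.

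Next I would handle the off-diagonal squares. For $U_{12}\in\mathcal{U}_{12}$ one has $U_{12}^2=0$, so the target identity reduces to showing $\varphi(U_{12})U_{12}+U_{12}\varphi(U_{12})=0$; by Lemma 2.5$(3)$ the component $\varphi(U_{12})$ has vanishing $(1,1)$- and $(2,2)$-parts, which combined with $U_{12}\mathcal{U}_{12}=\mathcal{U}_{12}U_{12}=0$ in the Peirce decomposition forces this, and symmetrically for $U_{21}$. The genuinely new work lies in the \emph{cross terms}: expanding $\varphi((U_{ij}+U_{kl})^2)$ and subtracting the two pure-square contributions isolates the mixed relation $\varphi(U_{ij}\circ U_{kl})=\varphi(U_{ij})\circ U_{kl}+U_{ij}\circ\varphi(U_{kl})$, which is exactly what Lemma 2.6 supplies for every admissible pair of indices. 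Assembling all of these, I would verify that the Peirce-component-wise sums match $\varphi(U)U+U\varphi(U)$; the only subtle point is that products like $U_{11}U_{12}$ and $U_{12}U_{21}$ produce terms living in the same Peirce corner, so one must check that the identities $(1)$, $(7)$, etc., combine consistently rather than double-counting.

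The main obstacle I anticipate is the bookkeeping when $U$ has all four components simultaneously nonzero: one must confirm that the collection of identities in Lemma 2.6, which were each derived from a \emph{single} pair of components, still sum correctly to the full Jordan identity, in particular that there are no leftover mixed terms such as contributions of the form $\varphi(U_{12})U_{21}$ that are not individually matched. The systematic way to dispose of this is to expand $\varphi((U_{11}+U_{12}+U_{21}+U_{22})^2)$ fully by additivity, group the $16$ resulting summands $\varphi(U_{ij}U_{kl})$ according to which $\mathcal{U}_{pq}$ they inhabit, and check against the analogous expansion of $\varphi(U)U+U\varphi(U)$ term by term; the $2$-torsion-freeness of $\mathcal{A}$ and $\mathcal{B}$ is what licenses cancelling the factor of $2$ that appears from $U_{ij}\circ U_{kl}=U_{ij}U_{kl}+U_{kl}U_{ij}$ in the diagonal corners. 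Once this matching is complete for arbitrary $U$, the definition of a Jordan derivation is verified and the lemma follows.
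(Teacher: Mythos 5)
Your overall strategy is exactly what the paper intends: its entire proof of this lemma is the one-line citation of Lemmas 2.5 and 2.6, and your Peirce-component bookkeeping is the correct way to fill that in. The diagonal corners via Lemma 2.6$(5)$,$(6)$, the six mixed pairs via the remaining items of Lemma 2.6 (plus the trivial pair $U_{11}\circ U_{22}$, which is killed by Lemma 2.5), and the cancellation of the factor $2$ inside $\mathcal{U}_{11}\cong\mathcal{A}$ and $\mathcal{U}_{22}\cong\mathcal{B}$ are all handled correctly, and additivity does make the reassembly automatic.

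There is, however, one step whose justification as written would fail: the off-diagonal squares. You claim that $\varphi(U_{12})U_{12}+U_{12}\varphi(U_{12})=0$ follows from Lemma 2.5$(3)$ ``combined with $U_{12}\mathcal{U}_{12}=\mathcal{U}_{12}U_{12}=0$.'' Lemma 2.5$(3)$ only gives $\varphi(U_{12})=X_{12}+X_{21}$ with $X_{12}\in\mathcal{U}_{12}$ and $X_{21}\in\mathcal{U}_{21}$; the products $X_{12}U_{12}$ and $U_{12}X_{12}$ do vanish for Peirce reasons, but $X_{21}U_{12}\in\mathcal{U}_{22}$ and $U_{12}X_{21}\in\mathcal{U}_{11}$ do not, so the Peirce decomposition alone does not force the sum to be zero. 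Note also that this identity is not among the eight items of Lemma 2.6, so it cannot be absorbed into your ``cross terms.'' The correct argument is to apply condition $(\mathbb{P})$ directly to the pair $U=V=U_{12}$, which satisfies $UV=VU=0$, obtaining $2\bigl(\varphi(U_{12})U_{12}+U_{12}\varphi(U_{12})\bigr)=0$; the two possibly nonzero Peirce components of this element lie in $\mathcal{U}_{11}\cong\mathcal{A}$ and $\mathcal{U}_{22}\cong\mathcal{B}$, which are $2$-torsion free, whence each vanishes (the doubled identity is in fact recorded, for $\mathcal{U}_{21}$, inside the paper's proof of Lemma 2.5). With this repair, and the symmetric one for $U_{21}$, the verification closes and the lemma follows.
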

\begin{proof}
By Lemmas 2.5 and 2.6, the conclusion follows.
\end{proof}

\begin{proof}[\textbf{Proof of Theorem 2.1}]

Let  $U$  be an arbitrary element in $\mathcal{U}.$   We define an additive mapping  $\varphi:  \mathcal{U} \rightarrow \mathcal{U}~$  by
$$\varphi(U)=\phi(U)-\phi(I)U+[P_{1}\phi(P_{1})P_{2}+P_{2}\phi(P_{2})P_{1},U],$$
where
$T=P_{1}\phi(P_{1})P_{2}+P_{2}\phi(P_{2})P_{1}$  in $\mathcal{U}.$
Then
$$\phi(U)=\varphi(U)-[T,U]+\phi(I)U.$$
We define $\delta$ and $\eta$ from $\mathcal{U}$ into itself by
$$\delta(U)=\varphi(U)-[T,U],~~and~~ \eta(U)=\phi(I)U,$$
it follows that
$$\phi=\delta+\eta.$$
By Lemma 2.7, $\delta$ is a Jordan derivation. Hence $\delta(I)=0,$ and $\eta(I)=\phi(I).$
By Lemma 2.2, it follows that $\eta$ is a multiplier. The proof is complete.
\end{proof}

\begin{corollary}
Let $\mathcal{A}$ be a unital ring, and $M_{n}(\mathcal{A}),~n\geq2,$ be the full matrix algebra of all $n\times n$ matrices over $\mathcal{A}.$  If $\phi: M_{n}(\mathcal{A})\rightarrow M_{n}(\mathcal{A})$  is  an additive  mapping  satisfying the condition $(\mathbb{P}),$ then there exist a
derivation $\delta: M_{n}(\mathcal{A})\rightarrow M_{n}(\mathcal{A})$ and a multiplier $\eta:M_{n}(\mathcal{A})\rightarrow M_{n}(\mathcal{A})$  such that
$\phi=\delta+\eta$. In addition, if
 $\phi(I)=0$,  then  $\phi$ is a derivation.
\end{corollary}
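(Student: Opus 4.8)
The plan is to realize $M_n(\mathcal{A})$ as a generalized matrix ring of the type treated in Theorem 2.1, apply that theorem to obtain the decomposition $\phi=\delta+\eta$ with $\delta$ a Jordan derivation, and then upgrade $\delta$ to a genuine derivation using the special structure of full matrix algebras. First I would fix a $2\times 2$ block decomposition: writing $n=p+q$ with $p,q\geq 1$ (for instance $p=1$, $q=n-1$), partition each $n\times n$ matrix into blocks to identify
$$M_n(\mathcal{A}) \cong \left[\begin{array}{cc} M_p(\mathcal{A}) & \mathcal{M} \\ \mathcal{N} & M_q(\mathcal{A}) \end{array}\right],$$
where $\mathcal{M}=M_{p\times q}(\mathcal{A})$ and $\mathcal{N}=M_{q\times p}(\mathcal{A})$, with the bimodule actions and the pairings $\sigma,\rho$ given by ordinary matrix multiplication.

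I would then check that this is a Morita context: the two associativity diagrams reduce to associativity of matrix multiplication, and $\mathcal{M}$ is a faithful $(M_p(\mathcal{A}),M_q(\mathcal{A}))$-bimodule because $\mathcal{A}$ is unital (testing against blocks whose single nonzero entry is $1_{\mathcal{A}}$ detects every nonzero matrix). Provided $\mathcal{A}$ is 2-torsion free, the corner rings $M_p(\mathcal{A})$ and $M_q(\mathcal{A})$ are 2-torsion free as well, since $2X=0$ entrywise forces $X=0$. Hence $M_n(\mathcal{A})$ falls within the scope of Theorem 2.1, and applying it yields $\phi=\delta+\eta$ with $\delta$ a Jordan derivation on $M_n(\mathcal{A})$ and $\eta$ a multiplier.

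The remaining and principal task is to show that this Jordan derivation $\delta$ is in fact a derivation. For this I would invoke the result of Alizadeh \cite{448}: every Jordan derivation from $M_n(\mathcal{A})$ $(n\geq 2)$ into $M_n(\mathcal{M})$ is a derivation whenever $\mathcal{A}$ is unital and $\mathcal{M}$ is 2-torsion free; taking $\mathcal{M}=\mathcal{A}$ gives exactly our setting. Alternatively, one can argue directly with the matrix units $E_{ij}$ and the Peirce decomposition: linearizing the Jordan identity and testing against the orthogonal idempotents $E_{ii}$ forces the cross terms $\delta(E_{ij}E_{kl})$ to split as $\delta(E_{ij})E_{kl}+E_{ij}\delta(E_{kl})$, which promotes the Jordan identity to full multiplicativity on arbitrary matrices. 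With $\delta$ now a derivation and $\eta$ still a multiplier, we conclude $\phi=\delta+\eta$; and since the proof of Theorem 2.1 gives $\eta(U)=\phi(I)U$ and $\delta(I)=0$, the hypothesis $\phi(I)=0$ makes $\eta$ vanish, leaving $\phi=\delta$ a derivation.

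The step I expect to be the main obstacle is this promotion of a Jordan derivation to a derivation, together with the accompanying torsion bookkeeping. Theorem 2.1 only delivers a Jordan derivation, and the passage to a genuine derivation genuinely exploits the full matrix structure (it fails for general generalized matrix rings, where Jordan derivations need not be derivations), so one must either cite Alizadeh's theorem with the correct torsion hypothesis or reproduce the Peirce-decomposition computation carefully. In particular 2-torsion-freeness of $\mathcal{A}$ is used twice---to place $M_n(\mathcal{A})$ inside the hypotheses of Theorem 2.1 and to run the Jordan-to-derivation argument---so I would make that assumption on $\mathcal{A}$ explicit rather than leave it implicit in the statement.
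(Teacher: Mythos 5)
Your proof is essentially the paper's own: the entire published argument for this corollary is ``By Theorem 2.1 and \cite[Theorem 3.1]{448}, the conclusion follows,'' which is exactly your route of realizing $M_n(\mathcal{A})$ as a generalized matrix ring via a block partition, applying Theorem 2.1, and then upgrading the resulting Jordan derivation to a derivation by Alizadeh's theorem. Your side remark that 2-torsion-freeness of $\mathcal{A}$ is implicitly required (both to place $M_n(\mathcal{A})$ within the hypotheses of Theorem 2.1 and to invoke Alizadeh) is a fair observation about a hypothesis the paper's statement of the corollary leaves unstated.
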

\begin{proof}
By Theorem 2.1 and  \cite[Theorem 3.1]{448}, the conclusion follows.
\end{proof}

\begin{corollary}
Let $\mathcal{R}$ be a  2-torsion free unital prime  ring with a
nontrivial idempotent  $P.$  If  $\phi: \mathcal{R}
\rightarrow \mathcal{R}$  be an additive mapping satisfying the  condition $(\mathbb{P})$ , then there
exist a derivation $\delta :\mathcal{R} \rightarrow \mathcal{R}$ and
a multiplier  $\eta :\mathcal{R} \rightarrow \mathcal{R}$  such that
$\phi=\delta+\eta$. In addition, if
 $\phi(I)=0$, then  $\phi$ is a derivation.
\end{corollary}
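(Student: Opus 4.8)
The plan is to exhibit $\mathcal{R}$ as a generalized matrix ring of the type covered by Theorem 2.1, read off the decomposition $\phi=\delta+\eta$ from that theorem, and then promote the Jordan derivation $\delta$ to a genuine derivation by invoking Herstein's classical theorem. The only substantive ring-theoretic work lies in checking that the Peirce decomposition of $\mathcal{R}$ relative to $P$ meets the hypotheses of Theorem 2.1; once that is in place the corollary is essentially a translation.

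First I would put $P_{1}=P$ and $P_{2}=I-P$ and form the Peirce decomposition $\mathcal{R}=\mathcal{A}\oplus\mathcal{M}\oplus\mathcal{N}\oplus\mathcal{B}$, where $\mathcal{A}=P\mathcal{R}P$, $\mathcal{B}=(I-P)\mathcal{R}(I-P)$, $\mathcal{M}=P\mathcal{R}(I-P)$ and $\mathcal{N}=(I-P)\mathcal{R}P$; taking the module actions and the pairings $\sigma,\rho$ to be multiplication in $\mathcal{R}$ realizes $\mathcal{R}$ as a generalized matrix ring. The corner subrings $\mathcal{A}$ and $\mathcal{B}$ inherit $2$-torsion freeness from $\mathcal{R}$, since $2PxP=0$ forces $PxP=0$, and dually for $\mathcal{B}$. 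The point that genuinely needs primeness is that $\mathcal{M}$ is a faithful $(\mathcal{A},\mathcal{B})$-bimodule: if $A=PxP$ satisfies $A\mathcal{M}=\{0\}$, then $PxP\,\mathcal{R}\,(I-P)=\{0\}$, and primeness together with $I-P\neq0$ gives $PxP=0$; dually, $\mathcal{M}B=\{0\}$ for $B=(I-P)y(I-P)$ reduces to $P\mathcal{R}B=\{0\}$ (using $(I-P)B=B$), whence $B=0$ because $P\neq0$. Thus all the hypotheses of Theorem 2.1 hold.

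Theorem 2.1 then supplies a Jordan derivation $\delta\colon\mathcal{R}\rightarrow\mathcal{R}$ and a multiplier $\eta\colon\mathcal{R}\rightarrow\mathcal{R}$ with $\phi=\delta+\eta$. Since $\mathcal{R}$ is a $2$-torsion free prime ring, Herstein's theorem (\cite{I. Herstein}) applies to $\delta$ and shows that this Jordan derivation is in fact a derivation, which yields the asserted decomposition. Finally, recalling from the proof of Theorem 2.1 that $\eta(U)=\phi(I)U$, the extra hypothesis $\phi(I)=0$ forces $\eta=0$, so that $\phi=\delta$ is a derivation.

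I expect the verification of the faithfulness of $\mathcal{M}$ to be the main (and essentially the only) obstacle, since this is precisely where the primeness of $\mathcal{R}$ and the nontriviality of $P$ are used; everything else is either routine bookkeeping over the Peirce components or a direct citation of Theorem 2.1 and of Herstein's theorem.
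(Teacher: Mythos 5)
Your proposal is correct and follows essentially the same route as the paper: realize $\mathcal{R}$ via the Peirce decomposition relative to $P$ as a generalized matrix ring (with primeness of $\mathcal{R}$ and nontriviality of $P$ giving the faithfulness of $P\mathcal{R}(I-P)$), apply Theorem 2.1, and then upgrade the resulting Jordan derivation to a derivation by Herstein's theorem. The only difference is that you spell out the faithfulness and $2$-torsion-freeness checks that the paper leaves implicit.
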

\begin{proof}
By assumption,  $\mathcal{R}$ is a prime ring, it follows that
$P\mathcal{R}(I-P)$  is a faithful
$(P\mathcal{R}P,(I-P)\mathcal{R}(I-P))$-bimodule. Then
$\mathcal{R}$  is  isomorphic to the generalized matrix ring $$\left[
                                                                 \begin{array}{cc}
                                                                   P\mathcal{R}P & P\mathcal{R}(I-P) \\
                                                                   (I-P)\mathcal{R}P & (I-P)\mathcal{R}(I-P) \\
                                                                 \end{array}
                                                               \right].$$
By Theorem 2.1 and  \cite[Theorem 3.1]{I. Herstein}, the
conclusion follows.
\end{proof}

Let $\mathbf{X}$  be a real or complex Banach space and let
$\mathcal{B}(\mathbf{X})$  and $\mathcal{F}(\mathbf{X})$  denote
the algebra of all bounded linear operators on  $\mathbf{X}$  and
the ideal of all finite rank operators in
$\mathcal{B}(\mathbf{X})$, respectively. A subalgebra
$\mathcal{A}(\mathbf{X})$ of $\mathcal{B}(\mathbf{X})$ is said
to be \emph{standard} in case $\mathcal{F}(\mathbf{X}) \subset
\mathcal{A}(\mathbf{X}).$  Obviously, there
exist nontrivial idempotents in any standard operator algebra. Any standard
operator algebra is prime. We have

\begin{corollary}
Let $\mathbf{X}$  be a real or complex Banach space and let
$\mathcal{A}(\mathbf{X})$  be a unital standard operator algebra on
$\mathbf{X}$. If $\phi: \mathcal{A}(\mathbf{X}) \rightarrow
\mathcal{A}(\mathbf{X})$   is a linear mapping  satisfying the
condition  $(\mathbb{P}),$ then there exist a derivation  $\delta
:\mathcal{A}(\mathbf{X}) \rightarrow \mathcal{A}(\mathbf{X})$ and
a multiplier   $\eta : \mathcal{A}(\mathbf{X}) \rightarrow
\mathcal{A}(\mathbf{X})$ such that  $\phi=\delta+\eta$.  In
addition, if
 $\phi(I)=0$, then $\phi$ is a derivation.

\end{corollary}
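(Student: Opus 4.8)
The plan is to deduce this statement as an immediate consequence of Corollary 2.10, since a unital standard operator algebra is a concrete instance of the ring treated there. First I would verify that the three structural hypotheses of Corollary 2.10 hold for $\mathcal{A}(\mathbf{X})$. It is unital by assumption. Being a real or complex algebra, multiplication by $2$ is injective, so $2A=0$ implies $A=0$ and $\mathcal{A}(\mathbf{X})$ is $2$-torsion free. Finally, as noted in the paragraph preceding the statement, the inclusion $\mathcal{F}(\mathbf{X})\subset\mathcal{A}(\mathbf{X})$ guarantees both that $\mathcal{A}(\mathbf{X})$ is prime and that it contains nontrivial idempotents; concretely, any rank-one idempotent $x\otimes f$ with $f(x)=1$ lies in $\mathcal{F}(\mathbf{X})$ and hence in $\mathcal{A}(\mathbf{X})$.

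Having recorded these facts, I would simply apply Corollary 2.10 with $\mathcal{R}=\mathcal{A}(\mathbf{X})$ and $P$ any such nontrivial idempotent. This produces a derivation $\delta$ and a multiplier $\eta$ on $\mathcal{A}(\mathbf{X})$ with $\phi=\delta+\eta$. For the supplementary assertion, I would recall from the proof of Theorem 2.1 that the multiplier is given explicitly by $\eta(U)=\phi(I)U$; consequently the hypothesis $\phi(I)=0$ forces $\eta=0$, whence $\phi=\delta$ is a derivation. This matches exactly the template already used for Corollaries 2.9 and 2.10.

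There is essentially no genuine obstacle here, since the entire analytic and algebraic content has been absorbed into Corollary 2.10 together with the known fact, cited in the excerpt, that standard operator algebras are prime. The only point I would take care to state, rather than merely cite, is the $2$-torsion freeness: it is automatic for algebras over $\mathbb{R}$ or $\mathbb{C}$ but is a substantive hypothesis of the ring-theoretic corollary, so it deserves an explicit sentence. Everything else follows verbatim from the preceding discussion, and the proof can therefore be kept to a single short paragraph invoking Corollary 2.10.
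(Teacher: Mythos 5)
Your argument is correct and is exactly the route the paper intends: the corollary is stated immediately after the remarks that any standard operator algebra is prime and contains nontrivial idempotents, and the paper gives no further proof, leaving it as an application of the prime-ring corollary (which in the paper's numbering is Corollary 2.9, not 2.10 as you cite). Your explicit check of $2$-torsion freeness and the observation that $\eta(U)=\phi(I)U$ forces $\eta=0$ when $\phi(I)=0$ fill in precisely what the paper leaves tacit.
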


\begin{theorem}
Suppose that  $\mathcal{T}=\left[
                   \begin{array}{cc}
                     \mathcal{A} & \mathcal{M}\\
                     0 & \mathcal{B} \\
                   \end{array}
                 \right]$  is  a triangular ring, where  $\mathcal{A}$ and
$\mathcal{B}$ are 2-torsion free.   If  $\phi: \mathcal{T} \rightarrow
\mathcal{T}$  is  an additive  mapping  satisfying the condition $(\mathbb{P}),$  then there exist a
derivation $\delta :\mathcal{T} \rightarrow \mathcal{T}$  and a
multiplier  $\eta :\mathcal{T} \rightarrow \mathcal{T}$  such that
$\phi=\delta+\eta$. In addition, if
 $\phi(I)=0$,  then  $\phi$ is a derivation .

\end{theorem}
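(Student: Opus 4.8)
The plan is to recognize the triangular ring $\mathcal{T}$ as a degenerate instance of a generalized matrix ring, apply Theorem 2.1 directly, and then upgrade the resulting Jordan derivation to a genuine derivation by a known structural fact about triangular rings.

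First I would observe that $\mathcal{T}$ is exactly the generalized matrix ring $\mathcal{U}$ obtained by taking $\mathcal{N}=0$ (with both pairing maps $\sigma$ and $\rho$ trivial). Since $\mathcal{A}$ and $\mathcal{B}$ are assumed $2$-torsion free and $\mathcal{M}$ is a unital faithful $(\mathcal{A},\mathcal{B})$-bimodule, the hypotheses of Theorem 2.1 are met verbatim. Applying Theorem 2.1 to $\phi$ therefore produces a Jordan derivation $\delta:\mathcal{T}\to\mathcal{T}$ and a multiplier $\eta:\mathcal{T}\to\mathcal{T}$ with $\phi=\delta+\eta$; moreover, the construction given there yields $\eta(U)=\phi(I)U$ and $\delta(I)=0$.

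The remaining, and only nonroutine, step is to promote $\delta$ from a Jordan derivation to a derivation. Here I would invoke the theorem of Zhang and Yu \cite{444}, which asserts that every Jordan derivation of a triangular ring is a derivation; this applies precisely because $\mathcal{M}$ is faithful, and this is the single place where the triangular structure (as opposed to the general generalized-matrix structure) is genuinely exploited. It follows at once that $\delta$ is a derivation, establishing the decomposition $\phi=\delta+\eta$ with $\delta$ a derivation and $\eta$ a multiplier.

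Finally, for the addendum, if $\phi(I)=0$ then $\eta(U)=\phi(I)U=0$ for every $U\in\mathcal{T}$, so $\eta\equiv 0$ and hence $\phi=\delta$ is a derivation. The main obstacle is thus not located in this corollary-style argument at all: all the substantive work is carried out in Theorem 2.1, and the only external input needed is the Jordan-derivation-is-a-derivation phenomenon for triangular rings supplied by \cite{444}.
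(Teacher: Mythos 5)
Your argument is correct, but it reaches the conclusion by a different route than the paper. The paper disposes of this theorem by re-running the chain of lemmas behind Theorem 2.1 in the triangular setting (``the proof is similar\ldots we leave it to the reader''); the payoff of doing so is that when $\mathcal{U}_{21}=0$, Lemma 2.5 forces $\varphi(\mathcal{U}_{12})\subseteq\mathcal{U}_{12}$ and the Jordan identities of Lemma 2.6 collapse to genuine derivation identities (for instance, in the analogue of part $(1)$ the extra term $\varphi(V_{12})U_{11}$ lies in $\mathcal{U}_{12}\mathcal{U}_{11}=\{0\}$), so one obtains a derivation directly with no external input. You instead observe that a triangular ring is the generalized matrix ring with $\mathcal{N}=0$ and zero Morita pairings --- which is indeed permitted by the paper's definition, since $\mathcal{N}$ need not be faithful --- apply Theorem 2.1 verbatim to get a Jordan derivation plus the multiplier $\eta(U)=\phi(I)U$, and then upgrade the Jordan derivation to a derivation by citing Zhang--Yu \cite{444}. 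This is legitimate and more economical. The one caveat is that the theorem of \cite{444} is stated for triangular algebras that are $2$-torsion free as a whole, which amounts to requiring $\mathcal{M}$ (and not only $\mathcal{A}$ and $\mathcal{B}$) to be $2$-torsion free; the statement you are proving does not assume this, and the paper's direct route does not need it. To make your proof match the stated hypotheses exactly, you should either note that the triangular specialization of Lemmas 2.5 and 2.6 already yields multiplicativity of $\varphi$ (so that \cite{444} is not needed), or add the hypothesis that $\mathcal{T}$ itself is $2$-torsion free before invoking \cite{444}.
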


\begin{proof}

 The proof is similar to the proof of Theorem 2.1. We leave it to the reader.
\end{proof}

\begin{lemma}

Let  $\mathcal{A}$ be an algebra  over $\mathbb{C}$ with unit $I.$  Suppose that $\mathcal{A}=\sum\limits^{\infty}_{i=1}\bigoplus\mathcal{A}_{i}$, where
$\mathcal{A}_{i}$ is a  unital subalgebra of  $\mathcal{A}$ with unit
$I_{i}~$  for every $i=1,2,\cdots $.  Let $\phi: \mathcal{A} \rightarrow \mathcal{A}$   be
a linear mapping  satisfying the condition  $(\mathbb{P}),$ then
$\phi(\mathcal{A}_{i})\subset \mathcal{A}_{i},~   i=1,2,\cdots $.
\end{lemma}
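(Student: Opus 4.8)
The plan is to exploit that the hypothesis $\mathcal{A}=\sum_{i=1}^{\infty}\bigoplus\mathcal{A}_i$ equips $\mathcal{A}$ with a family of pairwise orthogonal central idempotents. First I would record the structural facts I will use. For each $i$, the unit $I_i$ of $\mathcal{A}_i$, viewed inside $\mathcal{A}$, is a central idempotent; moreover $I_iI_j=0$ for $i\neq j$ and $\sum_i I_i=I$. Consequently $\mathcal{A}_i=I_i\mathcal{A}=I_i\mathcal{A}I_i$ is a two-sided ideal, and, because every $I_n$ is central, each $X\in\mathcal{A}$ is ``block diagonal'' in the sense that $I_mXI_n=I_mI_nX=0$ whenever $m\neq n$. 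I would also observe that, since $\mathcal{A}$ is an algebra over $\mathbb{C}$, it is automatically $2$-torsion free, so $2Y=0$ forces $Y=0$.

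The heart of the argument is a single application of $(\mathbb{P})$. Fix $A\in\mathcal{A}_i$ and any index $k\neq i$. Since $A=I_iA=AI_i$ and $I_iI_k=0$, we have $AI_k=I_kA=0$, so taking $B=I_k$ in the condition $(\mathbb{P})$ gives
\[
\phi(A)I_k+I_k\phi(A)+A\phi(I_k)+\phi(I_k)A=0.
\]
Now I would compress this identity by multiplying it on the left and on the right by $I_k$. Because $I_kA=AI_k=0$, the two terms involving $\phi(I_k)$ drop out, while the centrality and idempotency of $I_k$ collapse the first two terms, leaving $2\,I_k\phi(A)I_k=0$. The $2$-torsion freeness then yields $I_k\phi(A)I_k=0$, and block diagonality upgrades this to $I_k\phi(A)=0$.

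Finally, since $k\neq i$ was arbitrary, every block component of $\phi(A)$ outside $\mathcal{A}_i$ vanishes; using $\sum_i I_i=I$ (equivalently, that an element of a direct sum all of whose block components are zero must itself be zero) I conclude $\phi(A)=I_i\phi(A)\in\mathcal{A}_i$, which is the assertion. The computation is routine, so I expect no serious difficulty in the algebra; the only point demanding care is the passage to the conclusion in the infinite case, where $\sum_{i=1}^{\infty}\bigoplus\mathcal{A}_i$ must be read so that the $I_i$ are genuinely central and their ``sum'' recovers $I$ (as is the case for the von Neumann algebra and CDCSL decompositions to which this lemma is later applied). Granting this, no convergence subtlety intervenes, since the vanishing of each block $I_k\phi(A)$ is obtained one index at a time.
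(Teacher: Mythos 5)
Your proof is correct, and it follows the same basic strategy as the paper: pair $A\in\mathcal{A}_i$ with a central idempotent orthogonal to $\mathcal{A}_i$ and apply $(\mathbb{P})$. There are two small but genuine differences. The paper tests against the single complementary idempotent $I-I_i$ and disposes of the cross terms $A\phi(I-I_i)+\phi(I-I_i)A$ by first invoking its earlier lemma to locate $\phi(I_i)\in\mathcal{A}_i$ and $\phi(I-I_i)\in\sum_{j\neq i}\bigoplus\mathcal{A}_j$; you instead test against each $I_k$ ($k\neq i$) separately and kill the $\phi(I_k)$ terms by sandwiching the identity between $I_k$'s, which makes your argument self-contained --- it needs no information about where $\phi$ sends the idempotents, only centrality, orthogonality, and $2$-torsion freeness. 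The price is the extra assembly step at the end (passing from $I_k\phi(A)=0$ for all $k\neq i$ to $\phi(A)\in\mathcal{A}_i$), which you correctly flag as the only point where the meaning of the infinite direct sum matters; the paper's one-shot version with $I-I_i$ reaches $\phi(A)(I-I_i)=0$ directly and so avoids that discussion. Both arguments are sound for the algebraic and the von Neumann/CDCSL ($\ell^\infty$-type) direct sums actually used later in the paper.
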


\begin{proof}

Given any  integer $i$, let $\mathcal{B}=
\sum\limits^{\infty}_{j\neq i}\bigoplus\mathcal{A}_{j}$,
 then
$\mathcal{A}\cong\mathcal{A}_{i}\bigoplus\mathcal{B}$.
By Lemma 2.3, $~\phi(I_{i})  \in \mathcal{A}_{i}$  and $\phi(I-I_{i})
\in
 \mathcal{B}$.
Let $A$  be an arbitrary element in  $\mathcal{A}_{i}$, we have that
$$A(I-I_{i})=(I-I_{i})A=0.$$
By assumption, it follows that
$$2\phi(A)(I-I_{i})+\phi(I-I_{i})A+A\phi(I-I_{i})=0.$$
By assumption, it follows that  $$\phi(A)(I-I_{i})=0.$$
Thus $$\phi(A) \in \mathcal{A}_{i}.$$
The proof is complete.
\end{proof}

\begin{lemma}
Let  $\mathcal{A}$ be an algebra  over $\mathbb{C}$ with unit $I.$   Suppose that $\mathcal{A}=\sum\limits^{\infty}_{i=1}\bigoplus\mathcal{A}_{i}$, where
$\mathcal{A}_{i}$ is a  unital subalgebra of  $\mathcal{A}$ with unit $I_{i}~$  for every $i=1,2,\cdots $.
Let $\phi: \mathcal{A} \rightarrow \mathcal{A}$   be a linear mapping  satisfying the following  conditions:

$({1})$  $\phi(\mathcal{A}_{i}) \subset \mathcal{A}_{i};$

$({2})$  $\phi|_{\mathcal{A}_{i}}=\delta_{i}+\eta_{i},$  where
$\delta_{i}$ is a derivation on $\mathcal{A}_{i}$ and  $\eta_{i}$ is a multiplier on $\mathcal{A}_{i},$  for every $i=1,2\cdots.$

Then there exist a derivation $\delta$ from $\mathcal{A}$  into itself and a
multiplier $\eta$ from  $\mathcal{A}$  into itself, such that $\phi=\delta+\eta.$

\end{lemma}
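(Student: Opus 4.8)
The plan is to glue the blockwise data into global maps and check the two required identities component by component. Recall that in $\mathcal{A}=\sum_{i=1}^{\infty}\bigoplus\mathcal{A}_{i}$ the units $I_{i}$ are orthogonal central idempotents with $\sum_{i}I_{i}=I$, so that $\mathcal{A}_{i}=I_{i}\mathcal{A}=\mathcal{A}I_{i}$, one has $\mathcal{A}_{i}\mathcal{A}_{j}=\{0\}$ for $i\neq j$, and every $X\in\mathcal{A}$ is determined by its components $X_{i}:=I_{i}X\in\mathcal{A}_{i}$. I would define $\eta:\mathcal{A}\to\mathcal{A}$ by $\eta(X)=\phi(I)X$ and set $\delta:=\phi-\eta$, so that $\delta(X)=\phi(X)-\phi(I)X$. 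With these choices the lemma reduces to two claims: (A) $\phi(I)\in Z(\mathcal{A})$, which makes $\eta$ a multiplier since then $\eta(X)=\phi(I)X=X\phi(I)=\eta(I)X=X\eta(I)$; and (B) the Leibniz-type identity $\phi(XY)=\phi(X)Y+X\phi(Y)-\phi(I)XY$ for all $X,Y$, which says precisely that $\delta$ is a derivation.

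Both (A) and (B) follow formally once one knows the key compatibility property that $\phi$ commutes with each central projection,
$$I_{i}\,\phi(X)=\phi(I_{i}X)\quad\text{for all }X\in\mathcal{A}\text{ and all }i\geq 1;$$
call this $(\ast)$. Granting $(\ast)$, I would verify (A) and (B) by comparing $I_{i}$-components. For (A), $I_{i}\phi(I)=\phi(I_{i})=\delta_{i}(I_{i})+\eta_{i}(I_{i})=\eta_{i}(I_{i})$, which lies in $Z(\mathcal{A}_{i})$ because $\eta_{i}$ is a multiplier on $\mathcal{A}_{i}$ and $\delta_{i}(I_{i})=0$; since each component of $\phi(I)$ is central in its block and the blocks are orthogonal, $\phi(I)$ commutes with every $X$. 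For (B), using $(\ast)$ and $I_{i}XY=X_{i}Y_{i}$ the $i$-component of the left side is $\phi(X_{i}Y_{i})=\delta_{i}(X_{i})Y_{i}+X_{i}\delta_{i}(Y_{i})+\eta_{i}(I_{i})X_{i}Y_{i}$, and expanding the $i$-component of the right side and cancelling the multiplier terms (here one uses the centrality of $\eta_{i}(I_{i})$ in $\mathcal{A}_{i}$) yields the same expression. As components determine elements, (A) and (B) hold.

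The main obstacle is therefore the proof of $(\ast)$, which is where the infinite direct sum genuinely matters. Writing $X=X_{i}+X'$ with $X'=(I-I_{i})X\in\mathcal{B}_{i}:=(I-I_{i})\mathcal{A}$, linearity together with hypothesis $(1)$ gives $\phi(X_{i})\in\mathcal{A}_{i}$, so $(\ast)$ is equivalent to $\phi(\mathcal{B}_{i})\subseteq\mathcal{B}_{i}$, that is $I_{i}\phi(X')=0$. If the sum is finite this is immediate, since then $X'$ is a finite sum of elements of the $\mathcal{A}_{j}$ with $j\neq i$ and $(1)$ applies termwise. In the genuinely infinite (completed) case one needs an extra input, which in the applications is supplied by the ambient zero-product hypothesis $(\mathbb{P})$: since $X'I_{i}=I_{i}X'=0$ and $\phi(I_{i})\in\mathcal{A}_{i}$, the relation $\phi(X')\circ I_{i}+X'\circ\phi(I_{i})=0$ collapses to $\phi(X')\circ I_{i}=0$, and multiplying by $I_{i}$ and invoking the 2-torsion-freeness of $\mathcal{A}_{i}$ forces $I_{i}\phi(X')=0$. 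I expect this verification of $(\ast)$, and the attendant care with the convergence of the component decomposition, to be the only delicate point; once $(\ast)$ is in hand, the assembly of $\delta$ and $\eta$ is purely formal.
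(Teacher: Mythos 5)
Your proposal is correct, but it takes a more careful and genuinely different route than the paper. The paper's own proof simply defines $\delta(A)=\sum_{i}\oplus\,\delta_{i}(A_{i})$ and $\eta(A)=\sum_{i}\oplus\,\eta_{i}(A_{i})$ componentwise and asserts that $\phi=\delta+\eta$ with the required properties, leaving all verification to the reader; in particular it silently uses exactly your identity $(\ast)$, namely $I_{i}\phi(X)=\phi(I_{i}X)$, to pass from $\phi|_{\mathcal{A}_{i}}=\delta_{i}+\eta_{i}$ to $\phi(A)=\sum_{i}\phi(A_{i})$, and it does not address whether these componentwise sums even define elements of $\mathcal{A}$ when the direct sum is completed (as it is in the CDCSL and von Neumann applications). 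Your global definitions $\eta(X)=\phi(I)X$ and $\delta=\phi-\eta$ sidestep the convergence issue entirely, and your isolation of $(\ast)$ as the one nontrivial point is exactly right: hypotheses $(1)$ and $(2)$ alone do not yield $(\ast)$ for a genuinely infinite completed sum, so the lemma as literally stated has a gap which your argument exposes and repairs. Your derivation of $(\ast)$ from the condition $(\mathbb{P})$ applied to the orthogonal pair $X'=(I-I_{i})X$ and $I_{i}$ is valid (since $I_{i}$ is a central idempotent, $\phi(X')\circ I_{i}=2I_{i}\phi(X')=0$ forces $I_{i}\phi(X')=0$ over $\mathbb{C}$), and it is legitimate in context, because $(\mathbb{P})$ holds in every application of the lemma --- indeed Lemma 2.12, which supplies hypothesis $(1)$, is itself proved from $(\mathbb{P})$. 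The only caveat is that, strictly speaking, you prove the lemma under the additional standing hypothesis $(\mathbb{P})$; to obtain the lemma exactly as stated one should either add $(\ast)$ (or some continuity/normality of $\phi$) to the hypotheses or restrict to finite sums.
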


\begin{proof}
Let $A$ be an arbitrary element in $\mathcal{A}$, then $A$ has the form
$A=\sum\limits^{\infty}_{i=1}\bigoplus A_{i}$, where   $A_{i} \in
\mathcal{A}_{i}$~ for every  $i=1,2,\cdots $.
We define $\delta$ and $\eta$ from  $\mathcal{A}$ into itself by
$$\delta(A)=\sum\limits^{\infty}_{i=1}\bigoplus\delta_{i}(A_{i}),~ and~~\eta(A)=\sum\limits^{\infty}_{i=1}\bigoplus\eta_{i}(A_{i}).$$
It follows that $$\phi(A)=\delta(A)+\eta(A).$$
By assumption,  we may verify that $\delta$ is a derivation and $\eta$ is a multiplier.
\end{proof}
Let $\mathcal{A}$ be a CDCSL (completely distributive commutative
subspace lattice) algebra on a separable Hilbert space $\mathcal H$ (cf. \cite{587} ).
It is well known (see \cite[Lemmas 3 and 4]{518}) that $\mathcal{A}\cong
\sum\limits^{\infty}_{i=1}\bigoplus\mathcal{A}_i$, where each
$\mathcal{A}_i$ is a nest subalgebra of $\mathcal{B}(\mathcal{H}).$

The following theorem generalizes Corollary 2.5 of \cite{999}.
\begin{theorem}
Let  $\mathcal{A}$ be a CDCSL algebra on a separable Hilbert space $\mathcal
 H$.
 If $\phi: \mathcal{A} \rightarrow \mathcal{A}$ is a linear mapping
 satisfying the condition $(\mathbb{P})$,  then there exist a derivation  $\delta
:\mathcal{A} \rightarrow \mathcal{A}$ and a multiplier $\eta :
\mathcal{A} \rightarrow \mathcal{A}$ such that $\phi=\delta+\eta$.
In addition, if
 $\phi(I)=0$, then $\phi$ is a  derivation.

\end{theorem}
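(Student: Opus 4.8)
The plan is to reduce the CDCSL case to the preceding structural lemmas. By the cited decomposition (see \cite[Lemmas 3 and 4]{518}), any CDCSL algebra $\mathcal{A}$ on a separable Hilbert space splits as $\mathcal{A}\cong\sum^{\infty}_{i=1}\bigoplus\mathcal{A}_i$, where each $\mathcal{A}_i$ is a nest subalgebra of $\mathcal{B}(\mathcal{H})$ with unit $I_i$. This is precisely the hypothesis of Lemma 2.14 and Lemma 2.15, so the strategy is to invoke those two lemmas in succession, after first establishing that each summand $\mathcal{A}_i$ falls under the scope of the earlier triangular-ring result.

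First I would apply Lemma 2.14 to conclude that $\phi(\mathcal{A}_i)\subset\mathcal{A}_i$ for every $i$; this is immediate since $\phi$ satisfies condition $(\mathbb{P})$ and the summand decomposition meets the lemma's requirements. Next, for each fixed $i$, I would argue that the restriction $\phi|_{\mathcal{A}_i}$ again satisfies $(\mathbb{P})$ as a mapping from $\mathcal{A}_i$ into itself: if $U,V\in\mathcal{A}_i$ with $UV=VU=0$, then viewing $U,V$ inside the ambient $\mathcal{A}$ the hypothesis gives $\phi(U)\circ V+U\circ\phi(V)=0$, and since $\phi(U),\phi(V)\in\mathcal{A}_i$ by the previous step, the relation holds internally in $\mathcal{A}_i$. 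A nest algebra is a triangular ring in the sense of the paper (its $2\times 2$ corner decomposition relative to a nontrivial nest projection yields a triangular ring with $2$-torsion-free corners), so Theorem 2.12 applies to $\phi|_{\mathcal{A}_i}$ and produces a decomposition $\phi|_{\mathcal{A}_i}=\delta_i+\eta_i$ with $\delta_i$ a derivation on $\mathcal{A}_i$ and $\eta_i$ a multiplier on $\mathcal{A}_i$.

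With hypotheses $(1)$ and $(2)$ of Lemma 2.15 now verified, I would apply that lemma to assemble the pieces: it glues the summand-wise derivations and multipliers into a global derivation $\delta=\sum^{\infty}_{i=1}\bigoplus\delta_i$ and a global multiplier $\eta=\sum^{\infty}_{i=1}\bigoplus\eta_i$ on $\mathcal{A}$, giving $\phi=\delta+\eta$. For the final clause, if $\phi(I)=0$ then, since $\eta(I)=\phi(I)-\delta(I)=0$ (as $\delta(I)=0$ for any derivation) and $\eta$ is a multiplier satisfying $\eta(A)=\eta(I)A$, we get $\eta\equiv 0$, whence $\phi=\delta$ is a derivation.

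The only genuine point of care, and the step I expect to be the main obstacle, is the identification of each nest subalgebra $\mathcal{A}_i$ as a triangular ring with $2$-torsion-free corners so that Theorem 2.12 legitimately applies. A nest algebra need not decompose as a triangular ring in a single canonical way, but choosing any nontrivial projection $E$ in the nest yields $\mathcal{A}_i\cong\bigl[\begin{smallmatrix} E\mathcal{A}_iE & E\mathcal{A}_i(I-E)\\ 0 & (I-E)\mathcal{A}_i(I-E)\end{smallmatrix}\bigr]$, and the corners, being subalgebras of an algebra over $\mathbb{C}$, are automatically $2$-torsion free; one must also confirm the off-diagonal module $E\mathcal{A}_i(I-E)$ is faithful, which follows from the density properties of nest algebras. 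Once this identification is in hand the rest is a routine chaining of the two lemmas.
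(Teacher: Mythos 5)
Your overall strategy---decompose $\mathcal{A}$ into nest-algebra summands, show $\phi$ preserves each summand, decompose $\phi$ on each summand, and reassemble---is exactly the paper's (its proof of this theorem is a one-line citation of Corollary 2.10, Theorem 2.11, Lemma 2.12 and Lemma 2.13). The gap sits precisely in the step you single out as the main obstacle. You propose to treat every nest subalgebra $\mathcal{A}_i$ in Lu's decomposition as a triangular ring by ``choosing any nontrivial projection $E$ in the nest.'' But a nest need not contain a nontrivial projection: the trivial nest $\{0,\mathcal{H}_i\}$ is a nest, and its nest algebra is all of $\mathcal{B}(\mathcal{H}_i)$. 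Such summands genuinely occur (indeed $\mathcal{B}(\mathcal{H})$ itself is a CDCSL algebra), and for them no triangular presentation exists---relative to any nontrivial idempotent the lower-left corner of $\mathcal{B}(\mathcal{H}_i)$ is nonzero---so the triangular-ring theorem (Theorem 2.11) simply does not apply. This is exactly why the paper also invokes Corollary 2.10: a summand $\mathcal{B}(\mathcal{H}_i)$ with $\dim\mathcal{H}_i\geq 2$ is a unital standard operator algebra (equivalently, one could use Corollary 2.9 for prime rings with a nontrivial idempotent, or Theorem 2.1 applied to the resulting \emph{full} generalized matrix ring). The one-dimensional summands $\mathcal{A}_i\cong\mathbb{C}$ also need a separate, trivial remark, since there every linear map is already a multiplier.

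The remainder of your argument is sound and matches the paper: the restriction of condition $(\mathbb{P})$ to a direct summand is legitimate because products and the vanishing of products in $\mathcal{A}_i$ agree with those in $\mathcal{A}$, and $\phi(\mathcal{A}_i)\subset\mathcal{A}_i$ by the invariance lemma; the faithfulness of $E\mathcal{A}_i(I-E)$ for a nontrivial nest projection $E$ follows from the presence of the relevant rank-one operators; the assembly of the $\delta_i$ and $\eta_i$ into a global derivation and multiplier is exactly Lemma 2.13; and the $\phi(I)=0$ endgame is correct. So the proposal needs only the missing case analysis for summands with trivial nest to be complete.
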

\begin{proof}
By Corollary 2.10, Theorem 2.11, Lemmas 2.12 and 2.13, the conclusion follows.
\end{proof}
\begin{lemma}
Let  $\mathcal{A}$ be an abelian von Neumann algebra. If
$\phi: \mathcal{A} \rightarrow \mathcal{A}$  is a linear mapping
 satisfying the  condition $(\mathbb{P}),$  then $\phi$ is a multiplier.
\end{lemma}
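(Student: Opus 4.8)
The plan is to exploit the fact that, because $\mathcal{A}$ is abelian, the condition $(\mathbb{P})$ collapses to a far more manageable form: for $A,B\in\mathcal{A}$ one always has $AB=BA$ and $A\circ B=2AB$, so $(\mathbb{P})$ reads $AB=0\Rightarrow \phi(A)B+A\phi(B)=0$. First I would feed in idempotents. For a projection $P$ we have $P(I-P)=0$, and applying this implication to $A=P$, $B=I-P$ (using $\phi(I-P)=\phi(I)-\phi(P)$ and commutativity) yields $\phi(P)-2P\phi(P)+P\phi(I)=0$. Multiplying by $P$ gives $P\phi(P)=P\phi(I)$, and substituting back gives $\phi(P)=\phi(I)P=P\phi(I)$. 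Thus $\phi$ already coincides with the candidate multiplier $\eta\colon A\mapsto\phi(I)A$ on every projection.

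Next I would pass to the difference $\psi:=\phi-\eta$, i.e. $\psi(A)=\phi(A)-\phi(I)A$. Since every element of $\mathcal{A}$ is central, $\eta$ is a genuine multiplier and itself satisfies $(\mathbb{P})$; because the defining expression is additive in the mapping, $\psi$ is linear, satisfies $(\mathbb{P})$, and vanishes on every projection. The goal becomes $\psi\equiv 0$. Two facts are then immediate. By linearity $\psi$ vanishes on every finite linear combination of projections, i.e. on the subalgebra $\mathcal{S}$ of simple elements. Moreover $\psi$ is local: for any $A$, if $E$ is the support projection of $A$ then $A(I-E)=0$, and since $I-E$ is a projection with $\psi(I-E)=0$, condition $(\mathbb{P})$ gives $\psi(A)(I-E)=0$, so $\psi(A)$ sits under the support of $A$. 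Writing $\mathcal{A}\cong L^{\infty}(X,\mu)$, this says $\operatorname{supp}\psi(f)\subseteq\operatorname{supp}f$; combined with $\psi|_{\mathcal{S}}=0$ it forces $\psi(f)|_{F}=0$ whenever $f$ is constant on a measurable set $F$ (since then $\psi(f)|_{F}=\psi(cI)|_{F}=c\,\psi(I)|_{F}=0$).

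The remaining step, which I expect to be the genuine obstacle, is upgrading ``locally constant'' to arbitrary elements. Simple elements are norm-dense in $\mathcal{A}$, so if $\phi$ (equivalently $\psi$) is norm-continuous, the identity $\psi=0$ propagates from $\mathcal{S}$ to all of $\mathcal{A}$, yielding $\phi(A)=\phi(I)A=A\phi(I)$, i.e. $\phi$ is a multiplier. The difficulty is that locality does \emph{not} by itself control the error: approximating a self-adjoint $A$ by a spectral staircase $s_{n}\to A$, the difference $A-s_{n}$ has support essentially all of $X$, so $\operatorname{supp}\psi(A-s_{n})$ need not be small and one cannot conclude $\psi(A)=\lim_{n}\psi(s_{n})=0$ from locality alone. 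I would therefore aim to secure continuity of $\psi$ separately, either through an automatic-continuity argument for linear maps satisfying $(\mathbb{P})$ (a closed-graph or separating-space argument), or by invoking normality of $\phi$, in which case $\psi=0$ on projections passes to $\psi=0$ on $\mathcal{A}$ via the spectral resolution $A=\int\lambda\,dE_{\lambda}$. Once continuity (or normality) is in hand, the spectral theorem closes the proof at once.
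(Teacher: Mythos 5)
Your proposal is incomplete, and you have correctly diagnosed where: everything up to the reduction ``$\psi:=\phi-\phi(I)(\cdot)$ vanishes on projections, hence on simple elements, and is local'' is sound, but the passage from simple elements to all of $\mathcal{A}$ is never carried out. Continuity of $\phi$ is not a hypothesis of the lemma and normality is not available, and neither of the two escape routes you sketch (a closed-graph/separating-space argument, or spectral resolution under normality) is actually executed. As it stands the argument proves only that $\phi$ agrees with the multiplier $A\mapsto\phi(I)A$ on the linear span of projections, which is strictly weaker than the statement. So there is a genuine gap, not merely a stylistic difference.

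It is worth seeing how the paper closes exactly this gap, because it does so by a different and shorter route. Writing $\mathcal{A}\cong C(\Omega)$ with $\Omega$ extremely disconnected, if $AB=0$ then $A$ and $B$ have disjoint (clopen) supports; in the identity $\phi(A)B+A\phi(B)=0$ the first summand is supported in $\operatorname{supp}B$ and the second in $\operatorname{supp}A$, so the two terms vanish \emph{separately}: $\phi(A)B=A\phi(B)=0$. This upgrades $(\mathbb{P})$ to the annihilator-preserving condition, and the paper then invokes a known theorem (Essalen--Peralta--Ramirez, Corollary 2.6) asserting that annihilator-preserving linear maps on such algebras are multipliers; that citation is what absorbs the continuity issue you ran into. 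If you want to keep your hands-on approach, you would need to either prove the relevant automatic-continuity statement yourself or perform the same support-splitting trick and then supply (or cite) the annihilator-preserving $\Rightarrow$ multiplier step; without one of these the proof does not go through.
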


\begin{proof}
By assumption, $~\mathcal{A}$ is an abelian von Neumann algebra. It is well known that(\cite[Theorem 5.2.1]{R.Kadison J. Ringrose}) $\mathcal{A}$ is $*$-isomorphic to the algebra $C({\Omega}),$ where $\Omega$ is an extremely disconnected Hausdorff space.
If $A,B \in \mathcal{A}$  with $AB=0$, then the supports of $A$ and $B$ are disadjoint. So
 $$\phi(A)B+A\phi(B)=0$$ implies that $$\phi(A)B=A\phi(B)=0.$$ It follows that,
 $\phi$ is a linear annihilator-preserving mapping (cf.
\cite{580}).
By \cite[Corollary 2.6]{Ahlem Ben Ali Essalen},  $\phi$  is a
multiplier from $\mathcal{A}$ into itself.
\end{proof}

The following theorem generalizes Corollary 2.3 of \cite{999}.
\begin{theorem}
 Let $\mathcal{M}$ be a von Neumann algebra. If
$\phi: \mathcal{M} \rightarrow \mathcal{M}$  is a linear mapping
 satisfying  the  condition $(\mathbb{P})$, then there exist an  inner derivation  $\delta
:\mathcal{M} \rightarrow \mathcal{M}$ and a multiplier $\eta :
\mathcal{M} \rightarrow \mathcal{M}$ such that $\phi=\delta+\eta$.
In addition, if
 $\phi(I)=0$, then $\phi$ is an  inner derivation.

\end{theorem}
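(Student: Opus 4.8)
The plan is to split $\mathcal{M}$ into its abelian and its non-abelian parts by a central projection and to treat the two pieces with the machinery already in place. Since $\mathcal{M}$ is a von Neumann algebra, there is a largest central projection $z$ for which $z\mathcal{M}$ is abelian; then $\mathcal{M}=z\mathcal{M}\oplus(1-z)\mathcal{M}$, the summand $\mathcal{A}_1:=z\mathcal{M}$ is an abelian von Neumann algebra, and $\mathcal{A}_2:=(1-z)\mathcal{M}$ is a von Neumann algebra with no nonzero abelian central summand. As $z$ is central, both summands are unital subalgebras (with units $z$ and $1-z$), so the argument of Lemma 2.12 applied to this two-summand decomposition shows that $\phi(\mathcal{A}_i)\subseteq\mathcal{A}_i$ for $i=1,2$. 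It therefore suffices to analyse $\phi|_{\mathcal{A}_1}$ and $\phi|_{\mathcal{A}_2}$ separately and then reassemble them through Lemma 2.13.

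On the abelian part, Lemma 2.15 applies directly and shows that $\phi|_{\mathcal{A}_1}$ is a multiplier, which I record as $\phi|_{\mathcal{A}_1}=\delta_1+\eta_1$ with $\delta_1=0$ and $\eta_1$ a multiplier. For the non-abelian part the first task is to realise $\mathcal{A}_2$ as a generalized matrix ring. Because $\mathcal{A}_2$ has no abelian central summand, I would choose a projection $P\in\mathcal{A}_2$ whose central support and that of its complement both equal the unit $1-z$ of $\mathcal{A}_2$, that is $c(P)=c((1-z)-P)=1-z$. Cutting by $P$ then expresses $\mathcal{A}_2$ as the generalized matrix ring with corners $P\mathcal{A}_2P$, $P\mathcal{A}_2((1-z)-P)$, $((1-z)-P)\mathcal{A}_2 P$ and $((1-z)-P)\mathcal{A}_2((1-z)-P)$; the diagonal corners are von Neumann algebras and, being complex algebras, are automatically $2$-torsion free. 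The full-central-support condition is exactly what makes $P\mathcal{A}_2((1-z)-P)$ a faithful bimodule over the two diagonal corners, so Theorem 2.1 yields $\phi|_{\mathcal{A}_2}=\delta_2+\eta_2$ with $\delta_2$ a Jordan derivation and $\eta_2$ a multiplier. Since $\mathcal{A}_2$ is a $C^{*}$-algebra, the theorem of Peralta and Russo \cite{222} upgrades the Jordan derivation $\delta_2$ to a genuine derivation.

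Reassembling the two pieces by Lemma 2.13 produces a derivation $\delta=\delta_1\oplus\delta_2$ and a multiplier $\eta=\eta_1\oplus\eta_2$ on $\mathcal{M}$ with $\phi=\delta+\eta$. Now $\delta$ is an honest derivation on the whole von Neumann algebra $\mathcal{M}$, so Sakai's theorem \cite{sakai} makes $\delta$ inner, which gives the first assertion. For the addendum, a multiplier satisfies $\eta(A)=\eta(I)A$, and from $\delta(I)=0$ one gets $\eta(I)=\phi(I)$; hence the hypothesis $\phi(I)=0$ forces $\eta=0$, so that $\phi=\delta$ is an inner derivation.

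The hard part, and the only step not already packaged into an earlier result, is the reduction of $\mathcal{A}_2$: producing the projection $P$ with $c(P)=c((1-z)-P)=1-z$ and verifying that the resulting off-diagonal corner is a faithful bimodule. The existence of $P$ is precisely where the absence of an abelian central summand is used, since in an abelian summand every projection is central and no single projection can have both itself and its complement of full central support. Faithfulness is then read off from $c((1-z)-P)=1-z$: if $A\in P\mathcal{A}_2P$ annihilates $P\mathcal{A}_2((1-z)-P)$ on the left, then $A\mathcal{A}_2((1-z)-P)=0$, whence $A\,c((1-z)-P)=A=0$, and symmetrically on the right. Once this structural lemma is established, the remainder of the proof is a routine application of Theorem 2.1 together with the classical theorems of Peralta--Russo and Sakai.
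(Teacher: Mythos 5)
Your proposal is correct and follows essentially the same route as the paper: the same central decomposition into the abelian (type $I_1$) summand and its complement, Lemma 2.12 for invariance, Lemma 2.15 on the abelian part, a projection of full central support to realise the non-abelian part as a generalized matrix ring for Theorem 2.1, then Peralta--Russo to upgrade the Jordan derivation and Sakai to make it inner. The only difference is that you spell out the existence of the projection $P$ and the faithfulness of the off-diagonal corner, which the paper simply cites from Kadison--Ringrose.
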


\begin{proof}
We denote $\mathcal{M}=\mathcal{A}\bigoplus \mathcal B$, where
$\mathcal{A}=0$ or a von Neumann algebra of type $I_{1}$ and
$\mathcal{B}=0$ or a  von Neumann algebra has no direct summand of type $I_{1}$.

Suppose that $\mathcal{B} \neq 0,$   by Lemma 2.12, $~\phi(\mathcal{B})\subset \mathcal{B}.$
By Lemmas 2.13 and 2.15, we only need to show that
there exist a derivation $\delta_{2}$ on $\mathcal{B}$ and a
multiplier $\eta_{2}$ on $\mathcal{B}$, such that
$\phi|_{\mathcal{B}} =\delta_{2}+\eta_{2}.$

We denote $I_{\mathcal{B}}$ be the unit of $\mathcal{B}.$ For any $A \in
\mathcal{B}$, we denote the central carrier of $A$ by $C_{A}$.
It is well  known
that there exists a projection  $P$   in $\mathcal{B}$  such that
$C_{P}=C_{I_{\mathcal{B}}-P}=I_{\mathcal{B}}$ (cf. \cite[chapters 5 and 6]{R.Kadison J. Ringrose}).  Hence  $P\mathcal{B}(I_{\mathcal{B}}-P)$  is a faithful
$(P\mathcal{B}P,(I_{\mathcal{B}}-P)\mathcal{B}(I_{\mathcal{B}}-P))$ -bimodule.
It is clear that $\mathcal{B}$ is algebraic isomorphic to the
generalized matrix ring

                                                                 $$\left[
                                                                 \begin{array}{cc}
                                                                   P\mathcal{B}P & P\mathcal{B}(I_{\mathcal{B}}-P) \\
                                                                   (I_{\mathcal{B}}-P)\mathcal{B}P & (I_{\mathcal{B}}-P)\mathcal{B}(I_{\mathcal{B}}-P) \\
                                                                 \end{array}
                                                               \right].$$
By Theorem 2.1 and \cite[Corollary 17]{222}, there exist a derivation  $\delta_{2}$ on
$\mathcal{B}$  and a multiplier  $\eta_{2}$ on  $\mathcal{B}$ such
that  $\phi|_{\mathcal{B}} =\delta_{2}+\eta_{2}.$
By Lemmas 2.13, 2.15 and  \cite[Theorem 4.1.6]{sakai}, the conclusion
follows.
\end{proof}
$\mathbf{Remark}$ In \cite[Theorem 4.1]{501}, Alaminos et al.  show that if $\phi$ is a bounded linear mapping  from a C*-algebra $\mathcal{A}$
into an essential Banach $\mathcal{A}$-bimodule $\mathcal{M}$ satisfying the condition $(\mathbb{P}),$  then $\phi=\delta+\eta,$ where $\delta$ is a derivation and $\eta$ is a multiplier.
Theorem 2.16 tells that for the case of von Neumann algebra, the condition of boundness is not necessary.

\emph{Acknowledgements}. This paper was partially supported by National Natural Science Foundation of China(Grant No. 11371136).

\end{document}